\documentclass[twoside, 12pt]{amsart}
\usepackage{latexsym}
\usepackage{amssymb,amsmath,amsopn}
\usepackage[dvips]{graphicx}   
\usepackage{color,epsfig}      
\usepackage{url}

\makeatletter
\def\@settitle{\begin{center}%
  \baselineskip14\p@\relax
  \bfseries
  \uppercasenonmath\@title
  \@title
  \ifx\@subtitle\@empty\else
     \\[1ex]\uppercasenonmath\@subtitle
     \footnotesize\mdseries\@subtitle
  \fi
  \end{center}%
}
\def\subtitle#1{\gdef\@subtitle{#1}}
\def\@subtitle{}
\makeatother

\textwidth=16cm\textheight=24,5cm\parindent =0.5cm\parskip=3pt
\hoffset=-1,9cm \topmargin=-1,5cm

\newtheorem{theorem}{Theorem}
\newtheorem{proposition}{Proposition}[section]

\newtheorem{statement}{Statement}
\newtheorem{defi}{Definition}

\newtheorem{cor}{Corollary}

\DeclareMathOperator{\aff}{aff}

\begin{document}
\title[]{Strongly self-dual polytopes}
\subtitle{}
\author[\'A. G.Horv\'ath]{\'Akos G.Horv\'ath}
\address {\'A. G.Horv\'ath \\ Department of Algebra and Geometry \\ Mathematical Institute \\
Budapest University of Technology and Economics\\
H-1521 Budapest\\
Hungary}
\email{ghorvath@math.bme.hu}

\author[I. Prok]{Istv\'an Prok}
\address {Department of Algebra and Geometry \\Mathematical Institute \\
Budapest University of Technology and Economics\\
H-1521 Budapest\\
Hungary}
\email{prok@math.bme.hu}

\subjclass[2010]{52A40, 52A38, 26B15, 52B11}
\keywords{polar body}

\begin{abstract}
This article aims to study the class of strongly self-dual polytopes (ssd-polytopes for short), defined in a paper by Lovász \cite{lovasz}. He described a series of such polytopes (called $L$-type polytopes), which he used to solve a combinatorial problem. From a geometrical point of view, there are interesting questions: what additional elements of this class exist, and are there any with a different structure from the $L$-type ones? We show that in dimension three, one of their faces defines $L$-type polyhedra. Illustrating the algorithm of the proof, we present an ssd-polytope of 23 vertices whose combinatorial structure differ from those of $L$-type ones. Finally, with an elementary discussion, we prove that for fewer than nine vertices, there are only fifth one ssd-polyhedra, four of them can be constructed by Lovász's method, and we can find the fifth one with "the diameter gradient flow algorithm" of Katz, Memoli and Wang \cite{katz-memoli-wang}. 
\end{abstract}

\maketitle

\section{Introduction}

Lov\'asz in \cite{lovasz} defines the notion of a strongly self-dual polyhedron. He used this class of polytopes to investigate the chromatic number of the graph $G(n,\alpha)$ defined on the $n$-dimensional unit sphere. He proved the following theorem:

\begin{theorem}[\cite{lovasz}]\label{thm:theorem2}
The chromatic number of the graph formed by the principal diagonals of a strongly self-dual polytope in $\mathbb{R}^n$ is greater than or equal to $n+1$. Equality holds if its parameter is greater than the diameter of the monochromatic parts of the colouring defined by the regular simplex of dimension $n$.
\end{theorem}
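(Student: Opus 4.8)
The plan is to establish the inequality $\chi(G)\ge n+1$ by a topological argument about the neighbourhood complex of the graph $G$ of principal diagonals, and then to derive equality, under the stated hypothesis, from an explicit colouring coming from the regular simplex. Write $p_1,\dots ,p_N$ for the vertices and $H_1,\dots ,H_N$ for the facet hyperplanes of the strongly self-dual polytope $P\subset\mathbb R^n$, labelled so that $p_i\leftrightarrow H_i$ is the self-dual correspondence; thus $p_i\notin H_i$ and $p_i\in H_j\iff p_j\in H_i$. The first step is the observation that the neighbourhood of a vertex $p_j$ in $G$ is exactly the vertex set of the facet $F_j=P\cap H_j$: a principal diagonal joins $p_i$ to $p_j$ precisely when $p_i\in H_j$. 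Hence the neighbourhood complex $N(G)$, whose simplices are the subsets of $\{p_1,\dots ,p_N\}$ admitting a common neighbour, is the simplicial complex whose maximal faces are the vertex sets $V(F_1),\dots ,V(F_N)$ of the facets of $P$.

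The second step is to determine the homotopy type of $N(G)$. If $P$ is simplicial then $N(G)$ is literally the boundary complex of $P$, so $|N(G)|\cong\partial P\cong S^{n-1}$. In general $N(G)$ is obtained from the polytopal boundary complex of $P$ by replacing each facet with a simplex spanned by its own vertices; as each facet and each such simplex is contractible and their intersection patterns coincide, a nerve-type argument (equivalently, a deformation onto a pulling triangulation of $\partial P$) shows $|N(G)|\simeq\partial P\cong S^{n-1}$. Therefore $|N(G)|$ is $(n-2)$-connected, and Lov\'asz's lower bound for the chromatic number in terms of the connectivity of the neighbourhood complex gives
\[
\chi(G)\ \ge\ \operatorname{conn}|N(G)|+3\ \ge\ (n-2)+3\ =\ n+1 .
\]

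For the equality assertion, place $P$ and a regular $n$-simplex on a common sphere $\Sigma$ centred at the centre of $P$, and colour $\Sigma$ by assigning to each point the index of the nearest vertex of the simplex; this is the colouring referred to in the statement, it uses $n+1$ colours, and its monochromatic parts are the $n+1$ congruent spherical Voronoi cells, of common diameter $\delta$. The hypothesis that the parameter of $P$ exceeds $\delta$ means precisely that every principal diagonal of $P$ has length greater than $\delta$. Restricting the colouring to the vertices of $P$ therefore gives a proper $(n+1)$-colouring of $G$: two vertices in a common monochromatic part are within distance $\delta$ of each other and hence are not joined by a principal diagonal. Thus $\chi(G)\le n+1$, and together with the lower bound, $\chi(G)=n+1$.

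The step I expect to be the real obstacle is the identification $|N(G)|\simeq S^{n-1}$ when $P$ is not simplicial: one must check that passing from the (polygonal) facets to simplices on their vertex sets preserves the homotopy type, and the nerve/deformation-retraction argument has to be set up carefully enough to control all faces of $P$, not merely the facets. Once that is in place, the remaining ingredients---the description of neighbourhoods in $G$, the appeal to Lov\'asz's topological bound, and the verification that the regular-simplex colouring is proper---are routine.
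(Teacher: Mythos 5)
The paper does not actually prove this theorem: it is quoted from Lov\'asz's 1983 paper as background, so there is no in-paper proof to compare against. Your argument is, in substance, Lov\'asz's original one --- the lower bound via the $(n-2)$-connectivity of the neighbourhood complex together with his bound $\chi(G)\ge \mathrm{conn}(N(G))+3$, and the upper bound via the regular-simplex colouring --- and it is correct. The step you flag as the real obstacle can be closed cleanly without any pulling triangulation: for faces of a polytope one has $V(F_1)\cap\cdots\cap V(F_k)=V(F_1\cap\cdots\cap F_k)$, so every nonempty intersection of the maximal simplices $\Delta_{V(F_i)}$ of $N(G)$ is again a full simplex (hence contractible), and the nerve of this cover of $N(G)$ is literally the same abstract complex as the nerve of the cover of $\partial P$ by its facets, whose nonempty intersections are convex; two applications of the nerve lemma then give $|N(G)|\simeq \partial P\cong S^{n-1}$.
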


 In this article, we present ssd-polytopes whose combinatorics differ from the known ones. Our goal is also to classify those three-dimensional ssd-polyhedra that have few vertices. Lov\'asz mentioned that strongly self-dual polygons are exactly the odd regular polygons. The ssd-polyhedra are closely related to the self-polar convex polytopes; they are negatively (or anti-) self-polar polytopes that can write in the unit sphere. Despite the existing self-similarity theory, there is no three-dimensional compact classification result in this direction. We can find some nice  results on ssd-polytopes in the paper \cite{katz} by Katz. He introduced the concept of point-wise extremal sets of the unit sphere and proved that the convex hull of such a set is an ssd-polytope in the case when its spherical diameter is less than $\frac{2\pi}{3}$. Since we are interested in all ssd-polyhedra (without constraint on their diameter), this paper only partially helps. A recent paper in this direction by Jensen \cite{jensen} is also motivated by Lov\'asz's work. It contains interesting information in self-polar and also negatively self-polar polytopes. The terminology of this paper different from the current one, defining ssd-polytopes as negatively self-polar polytopes whose vertices are equidistant from the origin. The constraint that the vertices belong to a given sphere is powerful, so Jensen's article does not contain much information on the characterization of ssd-polytopes. In dimension three, Jensen defined three negative self-polar polytopes, two of which can be obtained as $L$-type ssd-polytopes, too. Still, the elements of the third series are such negatively self-polar polyhedra that they cannot be represented as ssd-polytopes. (Since coordinates give the vertices of these polyhedra, it is easy to check that there is no circumscribed ball of the elements of this sequence.) By this construction, Jensen proved that if $n=4$ or $n\geq 6$, there is a negatively self-polar polyhedron with $n$ vertices. Jensen formulated some questions, from which \emph{Question 9.5.} an existence problem: \emph{For which values of $r$ and $d$ does there exist a negatively self-polar polytope in $R^d$ such that its vertices are $r$ away from the origin?}
 
The most recent paper in this direction is \cite{katz-memoli-wang} written by Katz, Memoli and Wang. By a computer-aided algorithm, the authors find 64 pointwise extremal configurations of the unit sphere, which realize ssd-polytopes up to 10 vertices. They implemented the diameter gradient flow algorithm on the sphere from given configurations. They found a list of ssd-polytopes up to 10 vertices. 

In Theorem \ref{thm:uptoeight}, we prove that the above list is complete for at most eight vertices. We also justify in Theorem \ref{thm:determination} that one of their faces determines the $L$-type polyhedra. Using the algorithm of the proof of Theorem \ref{thm:determination}, we construct a new 23-vertex ssd-polyhedron, which neither $L$-type ssd-polyhedron nor holds the constraint on its diameter, required in \cite{katz}. This fact demonstrates that there are ssd-polytopes in which the vertex set is not a pointwise extremal set with a diameter less or equal to $2\pi/3$.

\section{Strongly self-dual three-dimensional polyhedra}

L. Lov\'asz in \cite{lovasz} introduced the class of strongly self-dual polytopes. (Another used terminology is that it is a negatively self-polar polyhedron inscribed in the unit ball.)
\begin{defi}
Let $P$ be a convex polytope in $\mathbb{R}^n$. We say that $P$ is \emph{strongly self-dual} if the following conditions hold.
\begin{itemize}
\item $P$ is inscribed in the $n-1$-dimensional unit sphere $S^{n-1}$ (so that all vertices of $P$ lie in the sphere $S^{n-1}$);
\item $P$ is circumscribed around a sphere $S'$ with center $O$ and radius $r$, where $0<r<1$ (so that $S'$ touches every facets of $P$);
\item There is a bijection $\sigma$ between vertices and facets such that if $v$ is any vertex then the facet $\sigma(v)$ is orthogonal to $v$.
\end{itemize}
\end{defi}

The following statement can reformulate the definition:
\begin{statement}[\cite{ghorvath},\cite{jensen}]\label{st:selfpol}
If a polytope $P$ is strongly self-dual, it is a homothetic copy of a self-polar polytope inscribed in a sphere.
\end{statement}

The following theorem collects some general information on strongly self-dual polytopes.

\begin{theorem}[\cite{lovasz}, \cite{ghorvath}]\label{thm:stronglysdpol}
Let $P$ be a strongly self-dual polytope of the $n$-dimensional Euclidean space. Denote by $V(P),|V|$, $E(P), |E|$ and $F(P),|F|$ the set and its cardinality of the vertices, the edges, and the facets, respectively. Then we have
\begin{enumerate}
\item[(a)] If $v_1,v_2\in V(P)$ and $v_1$ is a vertex of $\sigma(v_2)$ then $v_2$ is a vertex of $\sigma(v_1)$. Moreover $v \not \in \sigma (v)$ (see Lemma 4.3 in \cite{jensen}).
\item[(b)] Every principal diagonal of a strongly self-dual polytope is the same length. (Principal diagonals connect $v$ to the vertices of $\sigma(v)$.) Furthermore, the common length $\alpha=\sqrt{2+2r}$ is the parameter of the polytope.
\item[(c)] Let $P$ be a strongly self-dual polytope with parameter $\alpha$. \newline Then $\alpha\geq \sqrt{2(n+1)/n}$.
\item[(d)] For each $n\geq 2$ and $\alpha_1<2$ there exists a strongly self-dual polytope in $\mathbb{R}^n$ with parameter at least $\alpha_1$.
\item[(e)] The duality $\sigma$ puts the $k$-dimensional and $(n-k-1)$-dimensional faces to orthogonal pairs $\{G,\sigma(G)\}$ with a common normal transversal. Moreover if $d_G$ means the distance of the affine hull $\mathrm{aff}(G)$ of the face $G$ from the origin we have
$$
d_Gd_{\sigma(G)}=r=\frac{\alpha^2}{2}-1.
$$
(This result can be found as Lemma 4.5 in \cite{jensen}, too.)
\end{enumerate}
\end{theorem}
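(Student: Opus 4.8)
The plan is to reduce all five parts to one observation about the supporting hyperplanes of the facets; after that (a), (b), (c) are elementary inner-product bookkeeping, (e) follows from polar duality plus a short distance computation, and only (d) really requires a construction. First I would fix a vertex $v$ and pin down $\sigma(v)$. The facet $\sigma(v)$ is orthogonal to $v$, so its outer normal is $\pm v$; it cannot be $v$, since then $P\subseteq\{x:\langle x,v\rangle\le r\}$ would exclude the vertex $v$ itself (as $\langle v,v\rangle=1>r$). Hence the outer normal is $-v$ and, the supporting hyperplane being tangent to the insphere $S'$ and therefore at distance $r$ from the interior point $O$, we get $P\subseteq\{x:\langle x,v\rangle\ge -r\}$ and $\sigma(v)=\{x\in P:\langle x,v\rangle=-r\}$. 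In particular $\langle v',v\rangle\ge -r$ for every vertex $v'$, with equality precisely when $v'$ is a vertex of $\sigma(v)$. Now (a) is immediate from the symmetry of the inner product, and $v\notin\sigma(v)$ because $\langle v,v\rangle=1\ne -r$; and (b) follows because a vertex $w$ of $\sigma(v)$ satisfies $\|v-w\|^2=2-2\langle v,w\rangle=2+2r$, a value independent of $v$ and of $w$.

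For (c) I would use that $O\in\inter P=\inter\conv V(P)$, so by Carath\'eodory's theorem $O=\sum_{i=0}^m\lambda_iv_i$ with $m\le n$, all $\lambda_i>0$ and $\sum_i\lambda_i=1$. Pairing with $v_j$ and using $\langle v_i,v_j\rangle\ge -r$ gives $0\ge\lambda_j-r(1-\lambda_j)$, hence $\lambda_j\le r/(1+r)$; summing over $j$ yields $1\le(m+1)\,r/(1+r)\le(n+1)\,r/(1+r)$, so $r\ge 1/n$ and $\alpha^2=2+2r\ge 2(n+1)/n$. (Equality forces $m=n$, equal weights, and all pairwise inner products $-1/n$, i.e.\ the regular simplex.)

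Part (d) needs an explicit family, and there I would simply reproduce Lov\'asz's $L$-type polytopes from \cite{lovasz}, which in every dimension realize inradius $r$ -- equivalently parameter $\alpha=\sqrt{2+2r}$ -- arbitrarily close to $1$ (resp.\ $2$); I expect no soft substitute, since a generic inscribed polytope carries no reason to admit the bijection $\sigma$. For (e) I would first observe that $P=\bigcap_{v\in V(P)}\{x:\langle x,v\rangle\ge -r\}$, the facet half-spaces, so $P^{\circ}=-\tfrac{1}{r}P$, and hence polar duality composed with the scaling $x\mapsto-\tfrac{1}{r}x$ is a face-lattice anti-automorphism of $P$ extending $\sigma$ and sending $k$-faces to $(n-k-1)$-faces; the orthogonality of $\aff(G)$ and $\aff(\sigma(G))$ and the existence of a common normal transversal are then the standard statements for a polar pair of faces (equivalently, $\aff(\sigma(G))=\bigcap_{v\in V(G)}\{x:\langle x,v\rangle=-r\}$ has direction space $(\lin V(G))^{\perp}$). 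For the metric identity, choose affinely independent vertices $v_0,\dots,v_k$ spanning $\aff(G)$; since $G$ lies in a facet $\sigma(w)$ we have all $\langle v_i,w\rangle=-r$, and as $\langle O,w\rangle=0$ this forces $O\notin\aff(G)$, so the $v_i$ are linearly independent. Then the foot $p_G$ of the perpendicular from $O$ to $\aff(G)$ satisfies $\langle p_G,v_i\rangle\equiv d_G^2$, the foot $p_{\sigma(G)}$ of the perpendicular to $\aff(\sigma(G))$ satisfies $\langle p_{\sigma(G)},v_i\rangle\equiv -r$, and invertibility of the Gram matrix of the $v_i$ forces $p_{\sigma(G)}=-(r/d_G^2)\,p_G$; hence $d_Gd_{\sigma(G)}=r=\alpha^2/2-1$ by (b).

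The routine parts are (a), (b) and the Carath\'eodory estimate in (c). The main obstacle is (d): it genuinely needs Lov\'asz's $L$-type construction, for which I see no shortcut. A secondary point requiring care is the claim in (e) that the face-level $\sigma$ is dimension-reversing (without which one cannot identify $\aff(\sigma(G))$ with $\bigcap_{v\in V(G)}\{x:\langle x,v\rangle=-r\}$); there I would lean on Statement \ref{st:selfpol} rather than re-derive the face lattice of $P$ by hand.
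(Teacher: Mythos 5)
The paper does not prove Theorem~\ref{thm:stronglysdpol}; it is quoted from \cite{lovasz} and \cite{ghorvath} without proof, so there is no in-text argument to compare against. Judged on its own, your reconstruction is correct and matches the standard proofs in those references. The key normalization --- that the supporting hyperplane of $\sigma(v)$ is $\{x:\langle x,v\rangle=-r\}$ rather than $\{x:\langle x,v\rangle=r\}$ --- is established cleanly, and (a), (b) follow at once; your Carath\'eodory argument for (c) is exactly Lov\'asz's bound $r\ge 1/n$; and the identity $P^{\circ}=-\tfrac1r P$ (which is the content of Statement~\ref{st:selfpol}) correctly yields the face-lattice anti-automorphism extending $\sigma$, since the polar correspondence sends the vertex $v$ precisely to the facet $\{x\in P:\langle x,v\rangle=-r\}=\sigma(v)$ identified in (a). The linear-independence step in (e) (ruling out $O\in\aff(G)$ because $\langle O,w\rangle=0\ne -r$) is the right way to make the Gram-matrix argument work, and the conclusion $p_{\sigma(G)}=-(r/d_G^2)p_G$ gives both the common normal transversal through $O$ and $d_Gd_{\sigma(G)}=r$. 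Deferring (d) to Lov\'asz's explicit $L$-type construction is unavoidable and is also what the paper does (it reproduces the construction in the following subsection without reverifying the parameter estimate). The only point I would flag is cosmetic: in (c) you should note that $O$ is interior to $P$ because the insphere has positive radius, which licenses the use of Carath\'eodory on $O\in\conv V(P)$; you use this but do not state it.
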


We notice that the examined mapping $\sigma$ is precisely the well-known idempotent inclusion reversing bijection of the face lattice of the polytope into the face lattice of the dual polytope (see in  \cite{ziegler}). Hence, if $vw$ any edge of a three-dimensional polyhedron, then $v\subset vw\subset G$ implies the relation $\sigma(v)\supset \sigma(vw)\supset \sigma(G)$. Later, we use this fact without further comments.

The equality in (e) says that the product of the distances of the opposite faces from the origin is independent of the choice of the face. Furthermore, it can be expressed by the parameter of $P$.
We now prove that property (e) can be extended for any diagonal of an ssd-polytope, precisely we have:

\begin{proposition}\label{prop:diagonal}
If $(uv)$ is a diagonal (or especially an edge) of the ssd-polytope $P$, then the following equality holds:
$$
d_{(uv)}d_{\sigma(u)\cap\sigma(v)}=r.
$$ 
\end{proposition}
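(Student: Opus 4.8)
The plan is to reduce the identity to plane geometry in the $2$-dimensional linear subspace $\Pi:=\lin\{u,v\}$ through the origin $O$ spanned by the two vertices; the preparatory work is to locate the facet hyperplanes and to fix what $\sigma(u)\cap\sigma(v)$ means when $uv$ is a genuine diagonal. Let $w$ be any vertex, so $\|w\|=1$. Since $P$ is circumscribed about the insphere $S'$ of radius $r$ and $\sigma(w)$ is orthogonal to $w$, the hyperplane $\aff(\sigma(w))$ is tangent to $S'$ with $w$ as a unit normal, hence equals $\{x:\langle x,w\rangle=\varepsilon r\}$ for some $\varepsilon=\pm1$. In fact $\varepsilon=-1$ for every vertex: $P$ lies in the closed halfspace bounded by $\aff(\sigma(w))$ that contains $O$, and for $\varepsilon=+1$ that halfspace would be $\{\langle x,w\rangle\le r\}$, contradicting $w\in P$ together with $\langle w,w\rangle=1>r$. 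So $\aff(\sigma(w))=H_w:=\{x:\langle x,w\rangle=-r\}$, and — this is what makes the computation work — the sign is the same for $w=u$ and $w=v$. Put $M:=\aff(\sigma(u))\cap\aff(\sigma(v))=H_u\cap H_v$ and read $d_{\sigma(u)\cap\sigma(v)}$ as $\operatorname{dist}(O,M)$; if $uv$ is an edge then $M=\aff(\sigma(uv))$ and the claim is exactly part~(e), so only the diagonal case carries content. The case $u=-v$ is vacuous ($H_u\parallel H_v$, empty intersection), so I assume $u\ne -v$; then $u\ne\pm v$ and $u,v$ are linearly independent, so $M$ has codimension $2$.

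Next I reduce both distances to $\Pi$. The line $\ell:=\aff\{u,v\}$ lies in $\Pi$ and $O\in\Pi$, so the foot of the perpendicular from $O$ to $\ell$ lies in $\Pi$ and $d_{(uv)}=\operatorname{dist}(O,\ell)$ is an intrinsic quantity of $\Pi$. For $M$: the lines $H_u\cap\Pi$ and $H_v\cap\Pi$ in $\Pi$ have the non-parallel normals $u,v$, hence meet in a single point $p_0\in M$, and $M=p_0+(\lin\{u,v\})^{\perp}$; for $z\perp\lin\{u,v\}$ one gets $\|p_0+z\|^2=\|p_0\|^2+\|z\|^2$ because $p_0\in\Pi$, so $p_0$ is the point of $M$ nearest $O$ and $d_{\sigma(u)\cap\sigma(v)}=\|p_0\|$, again computed inside $\Pi$.

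Now the planar computation. Take orthonormal coordinates on $\Pi$ whose first axis bisects the angle $\theta\in(0,\pi)$ between $u$ and $v$, so $u=(\cos\tfrac\theta2,\sin\tfrac\theta2)$ and $v=(\cos\tfrac\theta2,-\sin\tfrac\theta2)$. Since $u,v$ have the same first coordinate, $\ell=\{y_1=\cos\tfrac\theta2\}$ and $d_{(uv)}=\cos\tfrac\theta2>0$; adding and subtracting the two equations $\langle y,u\rangle=\langle y,v\rangle=-r$ yields $p_0=(-r/\cos\tfrac\theta2,\,0)$, so $d_{\sigma(u)\cap\sigma(v)}=r/\cos\tfrac\theta2$, and the product is $r$, as claimed. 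The first coordinate axis is the common normal transversal of $\ell$ and $M$ (matching part~(e)); equivalently, in view of Statement~\ref{st:selfpol}, the identity expresses that $\ell$ and $M$ are polar to one another with respect to the sphere of radius $\sqrt r$, which is the structural reason it holds.

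I do not expect a real obstacle. The one step deserving care is the reduction to $\Pi$ — specifically the observation that the nearest point to $O$ of the codimension-two flat $M=H_u\cap H_v$ lies in $\Pi$, which is what legitimizes reading $d_{\sigma(u)\cap\sigma(v)}$ off the planar picture — together with the harmless exclusion of the antipodal case $u=-v$; the rest is the elementary computation above.
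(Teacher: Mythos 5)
Your proof is correct and follows essentially the same route as the paper: both arguments intersect everything with the $2$-plane $Ouv$ through the origin and read the identity off elementary plane geometry there (the paper via the similar triangles $\triangle_{OuF}\sim\triangle_{OTS}$, you via coordinates in $\lin\{u,v\}$). The extra care you take — fixing the sign in $\aff(\sigma(w))=\{x:\langle x,w\rangle=-r\}$, checking that the nearest point of $H_u\cap H_v$ to $O$ lies in the plane, and excluding $u=-v$ — only makes explicit what the paper's figure-based argument leaves implicit.
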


\begin{proof} 
Consider the intersection of the $2$-dimensional plane $Ouv$ with the $(n-1)$-dimensional hyperplanes $\sigma(u)$ and $\sigma(v)$ (see Fig. \ref{fig:diagonal}).

\begin{figure}
    \centering
    \includegraphics{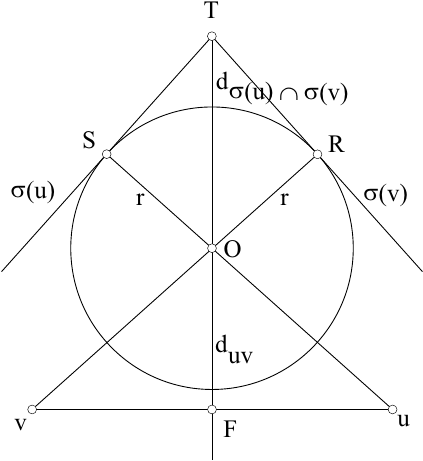}
    \caption{The dual of a diagonal}
    \label{fig:diagonal}
\end{figure}

Let $F$, $R$, $S$, and $T$ be the middle of the diagonal, the tangent points of the dual planes at the sphere of radius $r$ and the intersection point of the plane $Ouv$ and the $(n-2)$-dimensional plane $\sigma(u)\cap\sigma(v)$, respectively. Since $|OT|=d_{\sigma(u)\cap\sigma(v)}$, $|OF|=d_{(uv)}$, from the similar triangles $\triangle_{OuF}$ and $\triangle_{OTS}$ the statement follows.

\end{proof}

\begin{cor}\label{cor:diagonal} In dimension $3$, if $(uv)$ is a diagonal of a face $F$ of the polyhedron then $d_{(uv)}\geq r$, and so $|OT|=d_{\sigma(u)\cap\sigma(v)}\leq 1$. Hence the circumcircle of $\sigma(u)$ (resp. $\sigma(v)$) with center $S$ (resp. $R$) and radius $\sqrt{1-r^2}$ goes through the points $x,y$ of the unit sphere. By duality $\sigma(x)$ contains $u$ (resp. $v$) hence the diagonal $(uv)$ belongs to $\sigma(x)$. Similarly, $\sigma(y)$ also contains the segment $(uv)$. This implies that either $x=y$, consequently $(uv)$, goes through the centre of $F$ or, at most, one of $x$ and $y$ is a polyhedron vertex, and the other cuts by a face. (Of course, a real possibility that both $x$ and $y$ outer points of the closed polyhedron.) Clearly, we also have $d:=|xu|=|xv|=|yu|=|yv|$, and if one of $x$ and $y$ is a vertex then two from the above segments are principal diagonals with length $\sqrt{2+2r}$. (The distance $d$ of a principal diagonal from the center is $\sqrt{\frac{1-r}{2}}$ which is less than $r$ if $r> \frac{1}{2}$. 

Consequently, the following discussion can determine the type of segment $(uv)$, which joins two vertices. 

\begin{itemize}
\item First, we determine $d_{(uv)}$. If $d_{(uv)}<r$, $(uv)$ is a diagonal of the body.

\item If $d_{(uv)}\geq r$ we determine the points $x$ and $y$,  the lengths $d$ and check whether it is equal to $\sqrt{2+2r}$ or not. 
\begin{itemize}
    \item If the equality does not hold, then $x$ and $y$ are not vertices, and so $(uv)$ is a diagonal. 
    \item If the equality holds, we have two possibilities. $x=y$ and $(uv)$ is a diagonal of a face through its center, or $x\not = y$, and $(uv)$ is an edge of the polyhedron.
 \end{itemize}
\end{itemize}
\end{cor}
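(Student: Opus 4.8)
The plan is to derive everything from Proposition~\ref{prop:diagonal} and from parts (a) and (e) of Theorem~\ref{thm:stronglysdpol}, exploiting the analytic form of the faces that follows from the definition together with Theorem~\ref{thm:stronglysdpol}(b): for a vertex $u$ of the ssd-polytope $P$ the facet $\sigma(u)$ lies in the hyperplane $H_u=\{z:\langle z,u\rangle=-r\}$ (it is orthogonal to $u$ and at distance $r$ from $O$, and this is the sign making the principal diagonals have length $\sqrt{2+2r}$). I would first prove the inequality $d_{(uv)}\ge r$, then use Proposition~\ref{prop:diagonal} to locate the ``dual line'' $\ell=\sigma(u)\cap\sigma(v)$ and its intersection points $x,y$ with $S^{n-1}$, then translate, through $\sigma$ and Theorem~\ref{thm:stronglysdpol}(a), the property ``$x$ is a vertex of $P$'' into ``$(uv)$ is a chord of the facet $\sigma(x)$'', and finally read off the itemised procedure.

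For the inequality I would argue that a diagonal lives in a face with at least four vertices, hence in a facet $F=\sigma(w)$, and that Theorem~\ref{thm:stronglysdpol}(e) gives $d_F=r$ (because $d_{\sigma(F)}=d_w=1$). Since $\aff(uv)\subseteq\aff(F)$, the foot of the perpendicular from $O$ to the line $\aff(uv)$ lies in $\aff(F)$, so it has norm $\ge r$; hence $d_{(uv)}\ge r$, with equality precisely when that foot is the nearest point $-rw$ of $\aff(F)$ to $O$, i.e.\ precisely when the center $-rw$ of $F$ lies on $(uv)$. Assuming now only $d_{(uv)}\ge r>0$, one has $u\ne\pm v$, so $H_u\ne H_v$ and $\ell$ is a genuine line whose direction is orthogonal to $u$ and to $v$, hence to the plane $Ouv$; Proposition~\ref{prop:diagonal} then gives $|OT|=d_{\sigma(u)\cap\sigma(v)}=r/d_{(uv)}\le 1$, so $\ell$ meets $S^{n-1}$ in points $x,y$, with $x=y$ iff $|OT|=1$ iff $d_{(uv)}=r$; in dimension three these are the two common points of the circles $\aff(\sigma(u))\cap S^{n-1}$ and $\aff(\sigma(v))\cap S^{n-1}$ of radius $\sqrt{1-r^2}$ about the respective tangent points.

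The duality step is the heart of the matter. I would observe that every $x\in\ell\cap S^{n-1}$ satisfies $\langle x,u\rangle=\langle x,v\rangle=-r$, so $u,v\in H_x$, i.e.\ $(uv)\subseteq\aff(\sigma(x))$; with $|x|=|u|=|v|=1$ this forces $|xu|=|xv|=\sqrt{2+2r}$, and likewise for $y$. Consequently, if $x$ happens to be a vertex of $P$ then $u,v$ are vertices of the facet $\sigma(x)$, the segments $(xu),(xv)$ are principal diagonals, and $(uv)$ is an edge or a diagonal of $\sigma(x)$; conversely, if $(uv)$ is a diagonal of the facet $F=\sigma(w)$ then Theorem~\ref{thm:stronglysdpol}(a) shows $w$ is a vertex of $\sigma(u)$ and of $\sigma(v)$, so $w\in\ell\cap S^{n-1}=\{x,y\}$ and at least one of $x,y$ is a vertex. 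Since the relative interiors of two distinct facets of a polytope are disjoint, a diagonal of a facet cannot also lie in the affine hull of a second facet; hence when $x\ne y$ at most one of $x,y$ is a vertex of $P$ (the other is an exterior point of $P$, because $P\cap S^{n-1}=V(P)$), and when $x=y$ the line $\ell$ is tangent to $S^{n-1}$, so $|OT|=1$, $d_{(uv)}=r$, and by the equality case above $(uv)$ passes through the center of $F$.

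To assemble the procedure: if $d_{(uv)}<r$, then $(uv)$ lies in no facet and is a diagonal of the body. If $d_{(uv)}\ge r$, I would compute $x,y$ and determine whether they are vertices of $P$ (a vertex among them makes the two corresponding segments principal diagonals of length $\sqrt{2+2r}$, which is the displayed criterion). If neither is a vertex, $(uv)$ lies in no facet and is again a diagonal of the body. If at least one, say $x$, is a vertex, then $(uv)\subseteq\sigma(x)$, and: if $x=y$, then $(uv)$ is a diagonal of the facet $\sigma(x)$ through its center; if $x\ne y$ and $y$ is also a vertex, then $\sigma(x)\cap\sigma(y)$ is an edge of $P$ containing the vertices $u$ and $v$, so $(uv)$ is that edge; if $x\ne y$ and $y$ is not a vertex, then $(uv)$ is a diagonal of $\sigma(x)$ not through its center. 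I expect the real obstacle to be the duality step: making ``at most one of $x,y$ is a vertex unless $(uv)$ is an edge'' rigorous and disposing cleanly of the degenerate configurations — the tangency $x=y$; points $x,y$ exterior to $P$; points of $S^{n-1}$ lying in the relative interior of a face of $P$ without being vertices — so that the case list is genuinely exhaustive.
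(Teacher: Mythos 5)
Your proposal is correct and follows the same route as the paper: Proposition~\ref{prop:diagonal} gives $d_{\sigma(u)\cap\sigma(v)}=r/d_{(uv)}\le 1$, the points $x,y$ are the common points of the two dual circumcircles, and duality converts ``$x$ is a vertex'' into ``$(uv)\subseteq\sigma(x)$''. You supply several details the paper leaves implicit --- the identification $\aff(\sigma(u))=\{z:\langle z,u\rangle=-r\}$ via the principal-diagonal length, the proof of $d_{(uv)}\ge r$ from $d_F=r$ with equality exactly when $(uv)$ passes through the centre of $F$, and the converse that a face-diagonal forces one of $x,y$ to be a vertex --- and all of these check out. The one place you genuinely improve on the text: since every point of $\ell\cap S^{2}$ satisfies $\langle x,u\rangle=\langle x,v\rangle=-r$, the length $d=|xu|$ equals $\sqrt{2+2r}$ automatically, so the paper's test ``check whether $d=\sqrt{2+2r}$'' can never fail; the operative criterion is, as you say, whether $x$ and $y$ are actually vertices of $P$, and your resulting case split (both vertices with $x\ne y$ gives an edge, exactly one gives a face-diagonal, none gives a body-diagonal, $x=y$ gives a face-diagonal through the centre) is the correct and exhaustive form of the procedure.
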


\subsection{On the construction of Lov\'asz}

\begin{figure}[ht]
\includegraphics[scale=1]{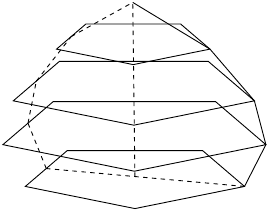}
\caption[]{Parallels and a meridian of $P(4,5)$.}
\label{fig:lovaszpoly}
\end{figure}

We note that in \cite{lovasz}, there is a construction to give a large class of $n$-dimensional strongly self-dual bodies where $n\geq 3$.
\emph{Let $C$ be the unit circle in ${\bf R}^2$ and let $E$ be an ellipse with axes $2$ and $2r_0$. Thus, $E$ touches $C$ in two points $x$ and $y$. Choose any $t$ with $r_0>t>r_1$ and let $C_t$ denote the circle concentric with $C$ and radius $t$. It is clear by a continuity argument that $t$ can be chosen so that we can inscribe an odd polygon $Q=(x_0=x,\ldots, x_{2k+1}=x)$ in $E$ so that the sides of $Q$ are tangent to $C_t$. Let $\alpha$ be an orthogonal affine transformation mapping $E$ on $C$ and let $y_0=x_0,y_1,\ldots, y_{2k+1}=x_0$ be the images of $x_0,x_1,\ldots ,x_{2k+1}$ under $\alpha$. Consider $C$ as the "meridian" of $S^{n-1}$ with $x$ as the "north pole". Let $S^{n-2}$ be the "equator" and suppose that $P_0$ is an $(n-1)$-dimensional ssd-polytope inscribed in the "equator". Let for each vertex $v$ of $P_0$, $M_v$ be the "meridian" through $v$ (so $M_v$ is a one-dimensional semicircle). Let $L_i$ denote the "parallel" through $y_i$ ($i=1,\ldots,k$). 
We denote by $u(v, i)$ the intersection point of $M_v$ and $L_i$. Further, let $u(v,0)=x$ for all $v$. We define the polytope
$$
P=\mathrm{conv}\{{u(v,i)}: v\in V(P_0), i=0,\ldots, k\}.
$$
(Here $V(P_0)$ denotes the set of vertices of $P_0$.)}

In dimension three, its base (namely $P_0$) is an odd regular $l$-polygon, and the meridians through a vertex of the base are $2k+1$-polygons with one common vertex, which is of degree $l$. (Note that only $k+1$ vertices of such a meridian are also the vertices of $P(k,l)$. The remaining $k$ are middle points of certain parallel edges of $P(k,l)$. (See Figure \ref{fig:lovaszpoly}.)

The construction has something like pyramidal building up; it allows one regular face with $l$ vertices, $l$ triangles, and many quadrangles. The quadrangles are symmetric trapezoids connecting an edge of the regular odd $l$-polygon with one triangle at the opposite vertex of the polyhedron. Of course, this latter triangle is the image (by the bijection $\sigma$) of that vertex of the
$l$-gon, which lies opposite the given edge in the polygon. Such a polytope is called by \emph{L-type ssd-polytope}. First, we list some exceptional cases of $L$-type polytopes. Denote by $P(k,3)$ the $L$-type ssd-polyhedron with base the regular triangle and $k$ layers.

\begin{proposition}\label{prop:regulartriangleface}
   Assume that the $L$-type polyhedron has a regular triangular face at its top vertex $x=u(v,0)$. Then, it is a regular simplex. 
\end{proposition}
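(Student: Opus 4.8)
The plan is to work in coordinates adapted to the apex and to combine the incidence-reversing duality $\sigma$ with the threefold symmetry of Lov\'asz's construction. Place the top vertex at $x=(0,0,1)$. The facet $\sigma(x)$ is the regular triangular base $T$ of $P(k,3)$, and its plane is orthogonal to $Ox$; by Theorem~\ref{thm:stronglysdpol}(e) we have $d_T\cdot d_x=r$ with $d_x=1$, so $d_T=r$, the plane of $T$ is $z=-r$, and the vertices $v_1,v_2,v_3$ of $T$ lie on the circle of radius $\sqrt{1-r^2}$ in that plane, at angular positions $120^\circ$ apart. The vertex $x$ has degree $3$, and its three edges run to the vertices $u(v_i,1)$ on the topmost parallel $L_1$, so these share a common height $h$ and have the same three angular positions as $v_1,v_2,v_3$. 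Each face at $x$ is bounded by two of these edges, so the face assumed regular is a triangle $F_{ij}=\conv\{x,u(v_i,1),u(v_j,1)\}$; by the rotational symmetry of $P$ all three faces at $x$ are then congruent regular triangles of this form.

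First I would fix $h$. Chords on the unit sphere give $|x-u(v_i,1)|^2=2-2h$ and $|u(v_i,1)-u(v_j,1)|^2=3(1-h^2)$; equating them (regularity of $F_{12}$) yields $2-2h=3(1-h^2)$, hence $h=-\tfrac13$ (the root $h=1$ is degenerate) and the common squared edge length of $F_{12}$ is $\ell^2=\tfrac83$.

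The crucial step is to determine $\sigma(F_{12})$. The reflection $\rho$ in the meridian plane through $v_i$ is a symmetry of $P(k,3)$ that fixes the edge $xu(v_i,1)$ and fixes $T$ while interchanging the two base vertices other than $v_i$; since the orthogonality condition defining $\sigma$ is isometry-invariant, $\sigma$ commutes with $\rho$, and therefore $\sigma(xu(v_i,1))$ must be the $\rho$-invariant edge of $T=\sigma(x)$, that is, the side of $T$ opposite $v_i$. Consequently $\sigma(F_{12})\subseteq\sigma(xu(v_1,1))\cap\sigma(xu(v_2,1))=\{v_3\}$, so $\sigma(F_{12})=v_3$. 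Now Theorem~\ref{thm:stronglysdpol}(e) applied to the pair $(F_{12},v_3)$ gives $d_{F_{12}}\cdot d_{v_3}=r$ with $d_{v_3}=1$, so $d_{F_{12}}=r$; on the other hand the circumcircle of the equilateral triangle $F_{12}$ is a circle on the unit sphere, so $(\ell/\sqrt3)^2+d_{F_{12}}^2=1$, and hence $r^2=1-\ell^2/3=\tfrac19$, $r=\tfrac13$.

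To finish, observe that $T$ lies at height $-r=-\tfrac13$, which is exactly the height $h=-\tfrac13$ of the topmost parallel $L_1$; as the parallels $L_1,\dots,L_k$ have strictly decreasing heights and $L_k$ sits at the base level, this forces $L_1=L_k$, i.e.\ $k=1$. Then $P(1,3)=\conv\{x,u(v_1,1),u(v_2,1),u(v_3,1)\}$ is a tetrahedron all of whose six edges have squared length $\tfrac83$ (three of them equal to $2-2h$, three to $3(1-h^2)$), hence a regular simplex. I expect the one genuinely delicate point to be the identification $\sigma(F_{12})=v_3$, which relies on the symmetry of Lov\'asz's construction together with the equivariance of the duality $\sigma$; once that is in hand, the remaining steps are elementary and use only Theorem~\ref{thm:stronglysdpol}(e).
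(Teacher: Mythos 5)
There is a genuine gap: you have silently assumed that the polyhedron is $P(k,3)$, i.e.\ that the base $\sigma(x)$ is a regular \emph{triangle} and that the apex $x$ has degree $3$. The hypothesis of the proposition is only that the polyhedron is $L$-type, so it is some $P(k,l)$ with $l$ odd; its top vertex has degree $l$ and is surrounded by $l$ congruent triangles $\conv\{x,u(v_i,1),u(v_{i+1},1)\}$, and the assumption is merely that one (hence each) of these is regular. Your chord computation $|u(v_i,1)-u(v_j,1)|^2=3(1-h^2)$ uses the $120^\circ$ spacing and is therefore special to $l=3$. The paper's proof spends most of its length precisely on the cases you omit: $l\geq 7$ is excluded by a circumscribed-circle argument, and $l=5$ requires a genuinely nontrivial spherical-trigonometric computation showing that the forced value $r=\sqrt{(5+2\sqrt{5})/15}$ is incompatible with the meridian polygon closing up. Your method would in fact dispatch $l\geq7$ easily (the regularity equation $1+h=\tfrac{1}{2\sin^2(\pi/l)}$ forces $h>1$ there), but for $l=5$ it only reproduces the value of $r$ and leaves the actual contradiction unproved, so the missing cases are not a cosmetic omission.

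Within the case $l=3$ your argument is correct and takes a different, arguably cleaner route than the paper's. You pin down the height $h=-\tfrac13$ of the first parallel from regularity, identify $\sigma(F_{12})=v_3$ via equivariance of $\sigma$ under the reflection symmetries (a sound step: an isometry of $P$ fixing $O$ permutes the orthogonal vertex--facet pairs, so it commutes with $\sigma$), deduce $r=\tfrac13$ from Theorem~\ref{thm:stronglysdpol}(e), and conclude $k=1$ by comparing the height of $L_1$ with that of the base. The paper instead argues for $k\geq2$ that a symmetric trapezoid inscribed in a circle of radius $\sqrt{1-r^2}$ cannot have three consecutive sides equal to the side of an inscribed equilateral triangle (the fourth side would degenerate). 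Your version avoids that combinatorial step at the cost of an explicit coordinate computation; either is fine for $l=3$, but the proposal as written does not prove the proposition.
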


\begin{proof}
We prove that for $k\geq 2$, the polyhedron $P(k,3)$ contains only one regular triangular face $F_0$, which is the base of the polyhedron. If at the top vertex $x_0$ one of the triangular faces is regular, then its dual is a vertex $x_1$ of $F_0$ with corresponding edges of equal length. Since every two of the three edges are edges of a face of $P(k,3)$ with a congruent circumscribed circle, we have such a symmetric trapezoid face in which three consecutive edges have the same length is equal to the size of a regular triangle inscribed in this circle. This means that the fourth edge has a length of zero, which is contradictory. Hence, the triangles of $P(k,3)$ at its top vertex $x$ are not regular.

We get that, in the sequence of $L$-type ssd-polyhedron, with $l=3$, the only polyhedron is the regular simplex $P(1,3)$, which holds the requirement of the statement.

Assume that $l\geq 7$ and $k\geq 1$. In this case, the statement's condition follows a symmetric trapezoid face with three equal edges with a length equal to the length of the edge of a regular 7-gon inscribed in its circumscribed circle. This trapezoid doesn't contain the centre of the circumscribed circle, which is impossible for the face of an ssd-polytope. This contradiction proves that $P(k,l)$ for $l\geq 7$ and $k\geq 1$ hasn't a regular triangle face.

Finally, assume that $l=5$. Let $r$ be the radius of the inscribed sphere; hence, the length of the edge of the base pentagon is $2\sqrt{1-r^2}\sin\frac{\pi}{5}$. 

Observe that the angles of the regular spherical triangles at the top vertex are $\frac{2\pi}{5}$ hence from the spherical Cosine theorem on angles gives the equality
$$
\cos \frac{2\pi}{5}=-\cos^2\frac{2\pi}{5}+\sin^2\frac{2\pi}{5}\cos c,
$$
where $c$ is the spherical lengths of the arc $u(j,0)u(j,1)$. Hence 
$$
\cos c =\frac{\sqrt{5}}{5}.
$$
On the other hand, the equal length of the edges of the regular triangles by Proposition \ref{prop:diagonal} is 
$$
|u(j,0)u(j,1)|=2\sqrt{1-\left(\frac{r}{\sqrt{1-(1-r^2)\sin^2\frac{\pi}{5}}}\right)^2}=2\sqrt{\frac{1-r^2}{1+(5-2\sqrt{5})r^2}},
$$
using the formula $\cos\frac{\pi}{5}=\frac{\sqrt{5}+1}{4}$.
We get that from the two values of equality.
$$
2\sqrt{\frac{(1-r^2)}{1+(5 -2\sqrt{5})r^2}}=2\sin{\frac{c}{2}}=2\sqrt{\frac{5-\sqrt{5}}{10}},
$$
from which follows that
$$
r=\sqrt{\frac{5+2\sqrt{5}}{15}}.
$$
We prove that $P(k,5)$ polyhedron with this $r$ doesn't exist. To this we consider the half-meridian with possible vertices $u(1,0)=x_0,u(1,1)\ldots u(1,k-1), u(1,k)$ and 
prove that the recursive setting of $u(1,j)$ couldn't close. Observe that the symmetric trapezoid $u(1,k-1)u(1,k)u(2,k)u(2,k-1)$ there are three edges with equal length (these are $u(1,k-1)u(1,k)$,$u(1,k)u(2,k)$ and $u(2,k)u(2,k-1)$. This length of the regular pentagon can be described in the circle of radius $r$. The spherical length $b$ of the arc $u(1,k-1)u(1,k)$ can be determined from its straight length. Since 
$$
|u(1,k-1)u(1,k)|=\frac{\sqrt{5}+1}{2}r=\sqrt{\frac{10+2\sqrt{5}}{15}},
$$
then 
$$
\sin\frac{b}{2}=\frac{1}{2}\sqrt{\frac{10+2\sqrt{5}}{15}}, \quad 
\cos\frac{b}{2}=\sqrt{\frac{25-\sqrt{5}}{30}} \quad \mbox{ and so } \cos b= \frac{10-\sqrt{5}}{15}.
$$
The spherical length of $u(1,k-1)u(1,k)$ is
$$
b= \mathrm{arccos}\frac{10-\sqrt{5}}{15} \approx 35.339614214104^\circ.
$$
Let $a$ be the angle $KOu(1,k)_\angle$, where $K$ is the centre of the small circle containing the points $u(j,k)$. Then 
$$
a=\mathrm{arccos}\, r=\mathrm{arccos }\sqrt{\frac{5+2\sqrt{5}}{15}} \approx 37.37736814065^\circ .
$$
Since $u(1,0)Ou(1,1)_\angle$ is $c$, then we also have that
$$
c=\mathrm{arccos} \frac{\sqrt{5}}{5} \approx 63.434948822922^\circ ,
$$
and thus
$$
a+b+c \approx 136.151931177676^\circ.
$$
From this we can see that $k\ne 2$ ($u(1,1)\ne u(1,k-1)$). Hence the edge $u(1,1)u(1,2)$ exists and it is the dual of the edge $u(3,k-1)u(4,k-1)$. It is a diagonal of the regular pentagon with edge $u(3,k)u(4,k)$ thus its length is
$$
\frac{\sqrt{5}+1}{2}\sqrt{\frac{10+2\sqrt{5}}{15}}=\sqrt{\frac{15+8\sqrt{5}}{15}}.
$$
As in the case of $u(j,0)u(j,1)$, by Proposition \ref{prop:diagonal} we get the length of $u(1,1)u(1,2)$ as follows:
$$
|u(1,1)u(1,2)|=2\sqrt{1-\left(\frac{r}{\sqrt{1-\frac{15+8\sqrt{5}}{60}}}\right)^2}=2\sqrt{1-\left(\frac{r}{\sqrt{\frac{45-8\sqrt{5}}{60}}}\right)^2}.
$$
But since
$$
r=\sqrt{\frac{5+2\sqrt{5}}{15}}>\sqrt{\frac{45-8\sqrt{5}}{60}},
$$
We get a negative number under the square root, which is impossible. This contradiction proves our statement.
\end{proof}

\begin{cor}\label{cor:regulartriangle}
If we know a regular triangular face of a $3$-dimensional $L$-type ssd-polyhedron, then this face uniquely determines the polyhedron (up to congruences).
\end{cor}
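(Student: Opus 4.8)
The plan is to extract all relevant parameters from the given face $F$ and then show that they force the whole polyhedron. Since $F$ is a facet inscribed in the unit sphere and tangent to the insphere of radius $r$, the plane $\aff(F)$ lies at distance $r$ from $O$ and the foot of the perpendicular from $O$ is the circumcentre of $F$, which for an equilateral triangle is its centroid; hence the edge length of $F$ equals $2\sqrt{1-r^2}\sin\frac{\pi}{3}=\sqrt{3(1-r^2)}$, so $F$ determines $r$ (and the parameter $\alpha=\sqrt{2+2r}$), the position of $O$ up to the reflection in $\aff(F)$ (a congruence), and thus the opposite vertex $x=\sigma(F)$ — the point in which the unit sphere meets the axis of $F$ on the far side of $\aff(F)$.

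Next I would distinguish two cases according to the position of $F$ in the $L$-type combinatorics. The only triangular faces of an $L$-type polyhedron $P(k,l)$ are the $l$ faces incident to the apex $x$, together with the base when $l=3$. If $F$ is incident to $x$, then by Proposition \ref{prop:regulartriangleface} the polyhedron is the regular simplex $P(1,3)$, which is determined by $F$. Otherwise $F$ is the base; then $l=3$ and $P=P(k,3)$, and since $x=\sigma(F)$ and $r$ are already fixed by $F$, it suffices to show that $r$ determines the number of layers $k$.

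For this I would run the reconstruction along one half-meridian $x=u(v,0),u(v,1),\dots,u(v,k)=v$ and show that its colatitudes $0=\theta_0<\theta_1<\dots<\theta_k$ depend only on $r$. The face incident to $x$ between two consecutive meridians is a (generally non-regular) triangle with apex $x$ and two vertices at colatitude $\theta_1$; imposing that its plane be at distance $r$ from $O$ is one equation in $\theta_1$, which simplifies to $\cos\theta_1=\frac{5r^2-1}{1+3r^2}$. For $1\le j\le k-1$ the face between colatitudes $\theta_j$ and $\theta_{j+1}$ is a symmetric trapezoid, and the same distance condition gives one equation relating $\theta_j$ and $\theta_{j+1}$, of the form $\cos^2\frac{\theta_{j+1}-\theta_j}{2}=r^2\bigl(1+3\sin^2\frac{\theta_{j+1}+\theta_j}{2}\bigr)$, with a unique admissible solution $\theta_{j+1}>\theta_j$. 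Thus the whole sequence $0=\theta_0<\theta_1<\dots$ is determined by $r$ and is strictly increasing, so the base colatitude $\arccos(-r)$ is attained at most once; the index at which it is attained must be $k$. Hence $r$ determines $k$, the recursion then determines all the colatitudes, and together with the fixed base $F$ this fixes every vertex of $P$; in particular two $L$-type polyhedra having $F$ as a face coincide, which is the claim. Proposition \ref{prop:diagonal} and Corollary \ref{cor:diagonal} can be used to cross-check the edge lengths entering these equations.

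The step I expect to be the main obstacle is the last one: verifying that the faces met along a half-meridian are precisely one apex triangle followed by trapezoids, writing the ``plane at distance $r$'' equation for each of them, and checking that each such equation has a unique admissible next colatitude, so that the recursion genuinely produces a single strictly increasing sequence (rather than several, or one that stalls before reaching $\arccos(-r)$). These are somewhat delicate but routine spherical-trigonometry computations, entirely parallel to the ones carried out for $l=5$ in the proof of Proposition \ref{prop:regulartriangleface}.
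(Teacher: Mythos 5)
Your strategy is sound, and it is worth saying up front that it does not match the paper's treatment for the simple reason that the paper offers no proof of this corollary at all: it is stated as an immediate consequence of Proposition \ref{prop:regulartriangleface}, whose proof shows that the only regular triangular faces of $L$-type polyhedra are the faces of the regular simplex and the base of $P(k,3)$ for $k\geq 2$. What the paper leaves completely implicit --- and what you correctly isolate as the actual content of the corollary --- is that the base of $P(k,3)$ determines $k$, i.e.\ that $k\mapsto r_k$ is injective. Your reduction ($F$ determines $r$ and the opposite vertex; the apex case is exactly the Proposition; the base case needs ``$r$ determines $k$'') is correct, and I checked both of your formulas: the apex triangle tangency condition does give $\cos\theta_1=\frac{5r^2-1}{1+3r^2}$, and the trapezoid between aligned layers at colatitudes $\theta_j,\theta_{j+1}$ gives $\cos^2\frac{\theta_{j+1}-\theta_j}{2}=r^2\bigl(1+3\sin^2\frac{\theta_j+\theta_{j+1}}{2}\bigr)$, of which the first is the degenerate case $\theta_j=0$.

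The step you defer, however, is genuinely load-bearing rather than routine. Writing $2\delta=\theta_{j+1}-\theta_j$, your trapezoid equation rearranges to $\cos(2\delta+\psi)=\frac{5r^2-1}{A}$ with $A=\sqrt{1+6r^2\cos 2\theta_j+9r^4}$ and a phase $\psi$ depending on $\theta_j$ and $r$; this generically has \emph{two} roots with $\delta\in(0,\pi)$. If both were admissible, two polyhedra with the same $r$ could branch into different colatitude sequences and your conclusion that the sequence, hence $k$, is determined by $r$ would not follow. So the second root must actually be excluded (for instance by requiring that the tangent point of the face plane lie inside the trapezoid and that the plane support a convex body containing $O$), not merely asserted away. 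A cleaner route to the same injectivity, and probably the argument the authors regard as obvious, is Lov\'asz's own two-dimensional closing condition: $r=t$ is the radius for which the $(2k+1)$-gon inscribed in the ellipse $E$ and circumscribed about $C_t$ closes up, and by the monotonicity of the rotation number in $t$ this $t_k$ is strictly increasing in $k$, hence injective; this bypasses the root-selection issue entirely.
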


\begin{theorem}\label{thm:determination}
Let $P$ be a L-type ssd-polytope of dimension three, and $F_0$ one of its faces with the vertices $x_1,\ldots,x_k$. Then $P$ is uniquely determined up to congruence by $F_0$.
\end{theorem}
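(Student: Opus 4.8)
The plan is to reconstruct $P$ from $F_0$ by a forced, layer‑by‑layer procedure, after a short case analysis according to which type of face $F_0$ is: the regular $l$‑gon base, a triangle incident with the apex (the top vertex $x$), or a symmetric trapezoid. The first step is common to all cases. Since $F_0$ is a facet, its supporting plane is tangent to the insphere of radius $r$, hence lies at distance $r$ from $O$; and since the vertices of $F_0$ lie on the unit sphere, the foot of the perpendicular from $O$ to $\aff(F_0)$ is the circumcenter of $F_0$, whose circumradius is therefore $\sqrt{1-r^2}$. Thus the shape of $F_0$ alone yields $r$, and we may place $F_0$ canonically in $\R^3$ (supporting plane at distance $r$, circumcenter at the foot of the perpendicular from $O$).

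Next I would locate the apex $x$ and, through it, the base $\sigma(x)$. If $F_0$ is the regular base, then $l=|V(F_0)|$ and $x=\sigma(F_0)$ is the point of the unit sphere on the normal line of $\aff(F_0)$ opposite to $F_0$; this already fixes $x$, the base, $l$, and $r$. If $F_0$ is a triangle, then by Theorem \ref{thm:stronglysdpol}(a) the apex is a vertex of $F_0$ (for $\sigma(F_0)$ is a base vertex, and a base vertex is a vertex of $\sigma(x)$, the base). When $l\ge 5$ the apex is distinguished as the unique vertex of $F_0$ of degree $l$ in $P$ — in an $L$‑type polyhedron the apex has degree $l$, an interior‑layer vertex degree $4$, and a bottom‑layer vertex degree $3$ — and then one recovers $\sigma(F_0)$ as the unit normal of $\aff(F_0)$ and, by Theorem \ref{thm:stronglysdpol}(e), the base $\sigma(x)$ as the regular $l$‑gon inscribed in the circle of radius $\sqrt{1-r^2}$ lying in the plane at distance $r$ with normal $x$ and having $\sigma(F_0)$ as a vertex; the integer $l$ itself is pinned down by matching the metric data of $F_0$ against the closed‑form shape of a top triangle as a function of $l$ and $r$, the kind of computation already carried out in the proof of Proposition \ref{prop:regulartriangleface}. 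When $l=3$ the base is a regular triangle, and one simply appeals to Corollary \ref{cor:regulartriangle} (with Proposition \ref{prop:regulartriangleface} accounting for the regular‑simplex exception). The trapezoid case is analogous: Theorem \ref{thm:stronglysdpol}(e), Proposition \ref{prop:diagonal} and Corollary \ref{cor:diagonal} let one pass from $F_0$ and its edges to the adjacent faces, and so, walking toward the top, reach the apex and the base.

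Once $x$, $l$, $r$, and the base are fixed, I would reconstruct $P$ along the meridian great circles through the base vertices, one layer at a time, and show the procedure never branches. Each new vertex $u(v,i)$ must lie on the already‑known meridian $M_v$; the parallel $L_i$ carrying it is forced, because the facet of $P$ between layers $i-1$ and $i$ must be tangent to the insphere (distance $r$ from $O$) and orthogonal to its $\sigma$‑dual vertex, which has already been placed, and together with the monotone growth of the layer radii this leaves a single admissible position. Proposition \ref{prop:diagonal} and Corollary \ref{cor:diagonal} then decide which of the candidate segments between the new vertices are genuine edges, so the combinatorics is forced as well; and, exactly as in the proof of Proposition \ref{prop:regulartriangleface}, the number $k$ of layers is determined by the condition that the meridian close up. Since the existence of $P$ is given, the procedure does close, and with no free choices; hence any $L$‑type ssd‑polytope carrying a face congruent to $F_0$ in the corresponding role is congruent to $P$.

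The main obstacle is proving that the reconstruction branches nowhere. The delicate points are: (i) from a small face — a triangle or a trapezoid — singling out the apex and the value of $l$ unambiguously, which is where the explicit description of the triangular and trapezoidal faces as functions of $(l,r)$ is genuinely needed; (ii) showing that at each step the two requirements ``the new facet is at distance $r$ from $O$'' and ``it is orthogonal to its already‑constructed dual vertex'', together with convexity and the monotonicity of the parallels, admit exactly one solution, so that the meridian is truly forced and closes for a unique $k$ in the manner of Proposition \ref{prop:regulartriangleface}; and (iii) the bookkeeping required to treat the three face types uniformly, including the degenerate cases $l=3$ (handed to Corollary \ref{cor:regulartriangle}) and $k=1$.
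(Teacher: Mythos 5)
Your route is genuinely different from the paper's: the paper runs a role-agnostic algorithm that repeatedly intersects the circles in which the dual planes $\aff(\sigma(x_i))$ of already-found \emph{vertices} cut the unit sphere (two consecutive circles meet in one known and one new vertex, discovered in a non-monotone order that alternates between the two ends of the polytope), and it invokes the $L$-type structure only at the end, to resolve the ambiguity caused by face-diagonals. Your plan instead sweeps monotonically through the layers, and its central forcing step is flawed as stated. You pin down the facet between layers $i-1$ and $i$ by requiring it to be tangent to the insphere \emph{and} orthogonal to its $\sigma$-dual vertex, ``which has already been placed.'' But $\sigma$ is inclusion-reversing: a facet containing a layer-$(i-1)$ vertex $u$ has as its dual a vertex of $\sigma(u)$, which lies near the opposite end of the polytope (the duals of the triangles at the apex are the base vertices; the duals of the trapezoids adjacent to the base are the vertices adjacent to the apex). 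So in a monotone sweep away from $F_0$ the dual vertex of the facet you are trying to place has in general \emph{not} yet been constructed, and your two conditions cannot be applied. One could try to repair this by noting that through a known edge there are only two tangent planes to the insphere, one of which is the facet already built on the other side; but then the real difficulty resurfaces, namely deciding which of the candidate segments produced at each stage are edges rather than face-diagonals and how the meridians close up for a unique $k$ --- exactly the $\mathcal{E}$ versus $\mathcal{D}$ analysis that occupies the second half of the paper's proof and that you list as an unresolved obstacle rather than settle.

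The second gap is in your initial case analysis. To start the sweep you must read off, from the metric shape of $F_0$ alone, which vertex is the apex and what $l$ (and, for a trapezoid, the layer index) is. The degree count you invoke ($l$ at the apex, $4$ at interior vertices, $3$ at the base) is a property of $P$, not of $F_0$, so it is not available before $P$ has been reconstructed; and the ``matching of metric data against the closed-form shape of a top triangle as a function of $l$ and $r$'' is precisely the injectivity statement that would have to be proved. Proposition \ref{prop:regulartriangleface} settles only the regular-triangle case and does not supply this for a general top triangle or trapezoid. Until both points are filled in, the proposal is a plausible programme rather than a proof; the paper's circle-intersection algorithm is designed exactly to avoid the first issue, and sidesteps the second by never needing to know the role of $F_0$ in advance.
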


\begin{proof}
The position of the points $x_1,\ldots,x_k$ in the unit sphere $\mathcal{S}$ determines the plane $\Pi_0$ of the face $F_0=F$, the centre $O_0$ of its circumscribed circle, the distance $r$ of $\Pi_0$ and the centre $O$ of $\mathcal{S}$. Hence $r$ is the radius of the inscribed ball and for $x_{k+1}=\mathcal{S}\cap \overrightarrow{O_0O}$ (where $\overrightarrow{O_0O}$ is the half-line with origin $O_0$ through $O$) we have $F_{0}=\sigma(x_{k+1})$. Consider the planes $\mathrm{aff}(\sigma(x_{1})), \mathrm{aff}(\sigma(x_{2})), \ldots, \mathrm{aff}(\sigma(x_{k}))$, respectively. These contain the vertex $x_{k+1}$ and intersect the sphere $\mathcal{S}$ in a chain of circles $C^{k+1}_1,\ldots, C^{k+1}_k$.  These circles intersect at point $x_{k+1}$, so they have another intersection point. Denote by the new points $x_{k+i+1}:=C^{k+1}_{i}\cap C^{k+1}_{i+1}$ where $i=1\ldots k$  (of course $C^{k+1}_{k+1}:=C^{k+1}_1$). 
For an $i$, the point $x_{k+i+1}$ may agree with one of the original vertices, say it is $x_j$ where $i\not= j$. (For example, this situation occurs in the case of ssd-pyramids.) This means that the next step of the algorithm cannot be applied to this vertex $x_j$. In the other cases, the constructed points are new vertices of $P$ with the property of duality $\sigma(x_{i}x_{i+1})=x_{k+1}x_{k+i+1}$.  By Corollary \ref{cor:diagonal} we consider the segments $x_{k+i+1}x_{k+i+2}$ for $i=1,\ldots k$ and determine their types, respectively. Since all of them are in a proper face $F_{i+1}$ of $P$, they are edges or face-diagonals. 

In the second step, we consider the respective dual planes $\aff(\sigma x_{k+i+1})$ and the corresponding circles $C^{k+i+1}_{i}$ for $i=1,\ldots, k$. Two consecutive circles $C^{k+i+1}_{i}$ and $C^{k+i+2}_{i+1}$ intersect each other at the vertex $x_i$ and at most one further point for which we have four possibilities:
\begin{itemize}
    \item It is a vertex of $P$ that occurs in the first step of the algorithm.
    \item It is  a new vertex of $P$ and with $x_i$ forms an edge dual to the edge $(x_{k+i+1}x_{k+i+2})$,
  \item It is out of $P$ because $x_{k+i+1}x_{k+i+2}$ is a face-diagonal,
  \item It is equal to $x_i$ because $(x_{k+i+1}x_{k+i+2})$ is a face- diagonal through the centre of the circle $C^{k+1}_i$.
\end{itemize}
If we find a new vertex, we repeat the second step originating from it. If there is no new vertex, we end the process. Since the second intersection point of two circles is unique, we finally get three determined sets, the set $\mathcal{V}=\{x_1,\ldots, x_{k+1},\ldots x_s\}$ of vertices, the set $\mathcal{E}$ of edges, and a set $\mathcal{D}$ of the set of face-diagonals, respectively. If $\mathcal{D}$ is empty, then we can determine the faces of the polyhedron as follows. Consider the set $\mathcal{V}$ and determine the incidence graph of the vertices and edges. Since our algorithm gives pairs of dual edges, we can determine the subset $\mathcal{E}_1$ of $\mathcal{E}$ containing those edges which are dual of an edge joined with the vertex $x_1$. Since $\mathcal{D}$ is empty, these edges should form a convex polygon that is the dual face $F_1=\sigma(x_1)$ of $x_1$. (If the algorithm produces a broken line inscribed in its circle, then it is stopped at a diagonal, which is an element of $\mathcal{D}$.) In the following step, we consider $x_2$ and determine $\sigma(x_2)=: F_2$, and so on... The process is finished in finite steps and uniquely builds up the polyhedron $P$, proving the statement.

Consider the case when $\mathcal{D}$ is not empty. Then, the first element $d_1$ connects the endpoints of a broken line of edges of a face. This face is either a symmetric trapezoid or the basic polygon, a regular polygon with an odd number of vertices. In both cases, the broken line is the union of at least two edges with a common vertex $v$. The valence of $v$ is three or four. It cannot be greater because a face incident with the up vertex of the polyhedron is a triangle, implying that $d_1$ is an edge. 
\begin{itemize}
    \item  Assume that the valence of $v$ is three. Then its dual is a triangle (from the vertex $v$ arises) in which at least two sides have equal lengths. The dual of these sides have equal lengths. If the length of the third one differs from their length, we conclude that these equal edges span the basic regular polygon of the body, and the third edge in $v$ is a common edge of two congruent trapezoids that contain it. Since we know the circumscribed circle of these faces, we can add new vertices of the set $\mathcal{V}$ to continue the process. Let us assume that the three edges of $v$ have equal lengths. If they form at least two distinct angles, then by the congruence of the trapezoids in the same layer, we can determine again which plane contains the regular polygon and continue the process as above. 
    
    Finally, assume that all edges containing $v$ and all angles formed by these edges are equal, respectively. In this case, the dual of $v$ is a regular triangle, and all faces belong to the vertex $u(v,0)$, which holds this property, too. In this case, the polyhedron is also uniquely determined by Corollary \ref{cor:regulartriangle}.

    \item If the valence of $v$ is four, then in this vertex, either four quadrangles meet and two by two are congruent, or two congruent quadrangles meet two congruent triangles. In both cases, we have two possibilities. If the angles of the consecutive edges in this vertex have two distinct values, these angles and lengths determine the corresponding opposite edges in a horizontal layer and form a regular polygon in their circumscribed circle. When all edges have the same lengths, and the four angles at $v$ are congruent, the dual $\sigma(v)$ is a square. Since the length of the parallel edges in the form:
    $2\sqrt{1-r_i^2}\sin\frac{\pi}{l}$ resp. $2\sqrt{1-r_j^2}\sin\frac{\pi}{l}$  for $P(k,l)$, hence $j=i+1$ and they are in a symmetric position with respect to the equator of the unit sphere. The equator (by duality) also contains $v$, which is impossible because $v$ should belong to one of the $k+1$ layers, while the equator lies between the $i$-th and $i+1$-th layers of the vertices. Hence, this case cannot be realized. 
\end{itemize}
Hence, we can add all the vertices of a face to our construction. With the corresponding new edges, we can continue our process and when stopped, the cardinality of $\mathcal{D}$ should be more minor than in the starting moment. Restarting the algorithm a finite number of times, we get that $\mathcal{D}$ is empty, and the process finishes producing the original polyhedron uniquely.
\end{proof}

\begin{cor}\label{cor:congruency}
   If $P$ and $Q$ are $L$-type ssd-polyhedra with a pair of congruent faces, the two polyhedra are congruent.
\end{cor}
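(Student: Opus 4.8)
The corollary is an almost immediate consequence of Theorem~\ref{thm:determination}, so the plan is short: promote the ``one polyhedron, one face'' statement of that theorem to a ``two polyhedra, two congruent faces'' statement by means of an ambient isometry, and then invoke the uniqueness of the reconstruction. The one observation to keep in mind is that the algorithm in the proof of Theorem~\ref{thm:determination} uses \emph{only} the placement of a single face in $\R^3$ — equivalently, only the congruence class of that face, which fixes the face up to an ambient isometry. Indeed, from the vertices of the face one reads off its circumradius $\rho$, hence the inradius $r=\sqrt{1-\rho^2}$ (the plane of the face being tangent to the insphere), the plane $\Pi_0$, its circumcentre $O_0$, the centre $O$ of the circumscribed unit sphere $\mathcal{S}$ (up to the side of $\Pi_0$), and the dual vertex $x_{k+1}$; every subsequent step then adds vertices, edges and faces that are functions of the data already produced. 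So the reconstruction is a well-defined map from congruence classes of faces to congruence classes of $L$-type ssd-polyhedra, returning $P$ when fed any face of $P$.

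Granting this, let $F$ be a face of $P$ and $F'$ a face of $Q$ with $F\cong F'$, and choose an isometry $\phi$ of $\R^3$ with $\phi(F)=F'$. All three defining properties of an ssd-polytope, and the Lov\'asz construction producing the $L$-type class, are preserved by isometries, so $\phi(P)$ is again an $L$-type ssd-polyhedron and it has $F'$ among its faces. Running the reconstruction of Theorem~\ref{thm:determination} from $F'$ therefore returns $\phi(P)$, and it equally returns $Q$, since both are $L$-type ssd-polyhedra possessing the face $F'$. As that output is unique up to congruence, $\phi(P)\cong Q$, whence $P\cong\phi(P)\cong Q$.

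The only point that could be called an obstacle is the verification implicit in the first paragraph: at each branching of the reconstruction — the four-way case split for the second intersection point of two circles, and the trapezoid-versus-base-polygon dichotomy when the diagonal set $\mathcal{D}$ is nonempty — the decision must be made purely from the constructed data, never from a pre-existing $P$. But this is exactly what the proof of Theorem~\ref{thm:determination} already establishes, so no further argument is needed and the corollary follows at once.
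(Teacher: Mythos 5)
Your argument is correct and is exactly the intended one: the paper states Corollary~\ref{cor:congruency} without proof as an immediate consequence of Theorem~\ref{thm:determination}, and your isometry-transport step (mapping $F$ onto $F'$ and comparing $\phi(P)$ with $Q$ via the uniqueness of the reconstruction) is the standard way to make that implication explicit. No gaps.
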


\subsection{Some further observations on ssd-polyhedra}

Using Euler relation $2=|F|+|V|-|E|$ for a strongly self-dual polyhedron, we get that the number of edges $2(|V|-1)$ and so
$$
\sum\limits_{l\geq 3}l\alpha_l=2|E|=4(|V|-1)=4\left(\sum\limits_{l\geq 3}\alpha_l-1\right)
$$
implying
\begin{equation}\label{eq:main}
4=\sum\limits_{l\geq 3}(4-l)\alpha_l=\alpha_3-\sum\limits_{l\geq 5}(l-4)\alpha_l,
\end{equation}
where $\alpha_l$ denotes the number of $l$-gonal faces.
\begin{remark}

We list some natural consequences of the Euler theorem.
\begin{itemize}
    \item For a polyhedron with triangular faces, we have $3|F|=2|E|$ giving $|F|=4$, so the only simplicial strongly self-dual polyhedron is the regular tetrahedron.
    \item For $k>3$ there is no such ssd-polyhedron with $n$ vertices whose all faces are $k$-sided polygon. In fact, by duality, the valences of the vertices are $k$. Hence the number of edges is $\frac{k}{2}n$, from which the number of faces is $(\frac{k}{2}-1)n+2$ which can be self-dual if and only if $2-\frac{k}{2}=\frac{2}{n}$, but this equation cannot be solved because the left side is non-positive for $k\geq 4$.
    \item Assume that a strongly self-dual polyhedron has only two faces for which the number of vertices is $p\geq q>4$.  (Let $P=\sigma(v)$, $Q=\sigma(w)$ be the faces with vertices $p$ and $q$, respectively.) Then, the number of triangle faces from (\ref{eq:main}) is $ \alpha_3=p+q-4$. We immediately get that
    $$
    |V|=|F|=p+q+\alpha_4-2 \mbox{ and } |E|=2p+2q+2\alpha_4-2.
    $$
\end{itemize}
\end{remark}
Finally, we have to mention Theorem 5.5 in \cite{jensen} which says that if a pyramid has a negatively self-polar realisation, then so does the base of the pyramid. Lovasz's result for the plane shows that if a polyhedron is pyramidal, its base is a regular odd polygon.

\subsection{Ssd-polyhedra with up to eight vertices.}
In manuscript \cite{katz-memoli-wang}, there is a list of point-wise extremal configurations of the unit sphere of dimension 2. All table elements realise an ssd-polyhedron of the three-space up to 10 vertices. Using an elementary argument, we prove that this list is complete up to 8 vertices, so we have the following theorem:

\begin{theorem}\label{thm:uptoeight}
There are five strongly self-dual polyhedra up to eight vertices. Four ones were produced by Lov\'asz's method, and the fifth by the diameter gradient flow algorithm introduced in \cite{katz-memoli-wang}. (The resulting types are shown in Fig. \ref{fig:ssdpoly}.) 
\end{theorem}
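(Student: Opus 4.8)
The plan is to cut the problem down to a short arithmetic list of admissible face vectors, then to an equally short combinatorial enumeration, and finally to decide ssd-realizability type by type.

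\textbf{Step 1 (arithmetic reduction).} By Theorem~\ref{thm:stronglysdpol} an ssd-polyhedron $P$ has $|F|=|V|$ and $|E|=2(|V|-1)$, and since $\sigma$ is an inclusion-reversing involution of the face lattice, the multiset of vertex valences coincides with the multiset of face sizes; write $\alpha_l$ for the number of $l$-gonal faces, which equals the number of valence-$l$ vertices. Combining $\sum_{l\ge 3}\alpha_l=|V|$ with (\ref{eq:main}) yields the reduced relation
$$
\alpha_4+\sum_{l\ge 5}(l-3)\,\alpha_l=|V|-4 .
$$
Running over $|V|=4,5,6,7,8$, this leaves only finitely many admissible face vectors $(\alpha_3,\alpha_4,\alpha_5,\dots)$ — e.g.\ $|V|=4$ forces $(\alpha_3)=(4)$, $|V|=5$ forces $(\alpha_3,\alpha_4)=(4,1)$, $|V|=6$ allows only $(5,0,1)$ and $(4,2,0)$, and so on up to $|V|=8$, where one gets the heptagonal-pyramid vector together with four others. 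I would tabulate this complete (roughly a dozen entries) list explicitly.

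\textbf{Step 2 ($L$-type and pyramidal types).} For each admissible face vector I would list the combinatorial types of $3$-polytopes realizing it; for $|V|\le 8$ this is a classical and very short enumeration (Steinitz). Among these, the three-dimensional $L$-type polyhedra that occur with at most eight vertices are exactly $P(1,3)$, $P(1,5)$, $P(2,3)$, $P(1,7)$, with $4,6,7,8$ vertices respectively; each is ssd by Lov\'asz's construction, and by Theorem~\ref{thm:determination} together with Corollary~\ref{cor:congruency} each is realized by a \emph{unique} metric ssd-polyhedron. A second batch of admissible combinatorial types are the pyramids over an even polygon or a non-regular odd polygon — the square pyramid at $|V|=5$ and the hexagonal pyramid at $|V|=7$; for these I would invoke Theorem~5.5 of \cite{jensen} together with Lov\'asz's planar result (ssd-polygons are precisely odd regular polygons) to conclude that no ssd-realization exists.

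\textbf{Step 3 (sporadic types).} The remaining combinatorial types — neither $L$-type nor pyramids of the excluded kind — are the real content. For each such type I would forbid an ssd-realization by combining: (i) the metric fact, already exploited in the proof of Proposition~\ref{prop:regulartriangleface}, that the circumcenter of each face (the foot of the perpendicular from $O$ onto the facet plane) must lie in the \emph{interior} of that face; (ii) the constancy of the principal-diagonal length $\alpha=\sqrt{2+2r}$ from part (b); (iii) Proposition~\ref{prop:diagonal} and Corollary~\ref{cor:diagonal}, which relate $d_{(uv)}$, the distance of the dual segment, and $r$, and let one decide whether a vertex pair is an edge or a face-diagonal; and (iv) the symmetric orthogonality $v_1\in\sigma(v_2)\Leftrightarrow v_2\in\sigma(v_1)$. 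Concretely, a triangular face $\sigma(v)=\{a,b,c\}$ forces $v\perp\aff(\{a,b,c\})$ and $|va|=|vb|=|vc|=\alpha$, so the faces around a low-valence vertex are rigidly pinned down, and the same elementary spherical trigonometry as in Proposition~\ref{prop:regulartriangleface} produces a contradiction (a forbidden face not containing its circumcenter, a negative quantity under a square root, or an equator forced through a vertex). This eliminates every sporadic type except one, which is precisely the combinatorial type of the configuration produced by the diameter gradient flow of \cite{katz-memoli-wang}; exhibiting that configuration, and checking directly — by a continuation argument in the spirit of Theorem~\ref{thm:determination} — that it admits a unique ssd-realization up to congruence, completes the list of exactly five.

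\textbf{Main obstacle.} The difficulty is concentrated in Step~3: unlike the $L$-type arguments there is no layered, "top-vertex" structure to lean on, so each sporadic combinatorial type (several for $|V|=6,7,8$) must be defeated directly from the edge/diagonal distance identities together with the face-contains-its-circumcenter condition, and one must make sure that exactly one type survives. The second delicate point is uniqueness of the metric realization for that surviving, non-$L$-type polyhedron, which is not covered by Corollary~\ref{cor:congruency} and has to be argued by hand.
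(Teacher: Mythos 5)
Your skeleton -- Euler's relation plus self-duality to pin down the admissible face vectors, then a finite combinatorial enumeration, then a realizability check type by type -- is exactly the paper's strategy, and your arithmetic in Step~1 is correct (indeed slightly more systematic than the paper's own case list for $|V|=8$). Step~2 also matches: the four $L$-type survivors $P(1,3),P(1,5),P(2,3),P(1,7)$ are the right ones, and the exclusion of the square and hexagonal pyramids via Jensen's Theorem~5.5 together with Lov\'asz's planar result is precisely what the paper does.

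The genuine gap is Step~3, which is where essentially all of the content of the theorem lives and which you leave as a promissory note: the sentence ``this eliminates every sporadic type except one'' \emph{is} the theorem. Moreover, the tool you propose for these eliminations is not the one that actually makes them tractable. The paper kills almost every sporadic face lattice by a purely \emph{combinatorial} argument: it attempts to construct the bijection $\sigma$ on the candidate lattice and derives a contradiction from (a) $v\notin\sigma(v)$, (b) the symmetry $v_1\in\sigma(v_2)\Leftrightarrow v_2\in\sigma(v_1)$, and (c) the requirement that $\sigma(u)\cap\sigma(v)$ be an edge whenever $uv$ is an edge (e.g.\ in the $6$-vertex case with two quadrangles, and in most of the $8$-vertex cases, no admissible $\sigma$ exists at all). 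Metric information of the kind you list -- equal principal diagonals, circumcenter inside the face, Proposition~\ref{prop:diagonal} -- is invoked only once, in the $7$-vertex case where a valid $\sigma$ does exist combinatorially and forces all edges equal, whence a regular tetrahedron and a contradiction. Running every elimination through spherical trigonometry, as you propose, would be far more laborious and you give no evidence that it closes each case; you should first try to rule out each candidate $\sigma$ combinatorially and reserve the metric argument for the lattices that admit one. Finally, note that for the fifth (non-$L$-type) polyhedron the paper only needs \emph{existence}, which it delegates to the Katz--Memoli--Wang configuration together with Katz's diameter criterion; the metric uniqueness you propose to prove ``by hand'' is an extra burden not required by the statement.
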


\begin{figure}[ht]
\includegraphics[scale=0.8]{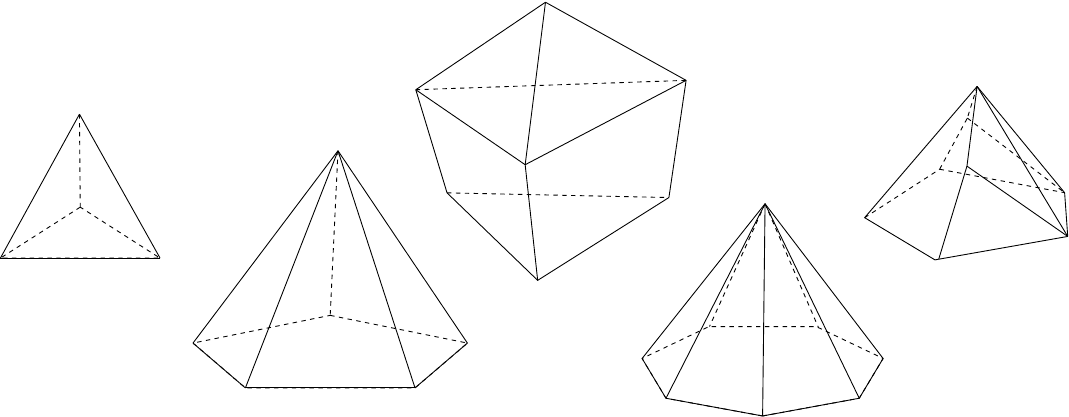}
\caption[]{The strongly self-dual polyhedra with at most eight vertices.}
\label{fig:ssdpoly}
\end{figure}

\begin{proof}

\noindent{\bf The number of vertices are  $n=4,5$ or $6$, respectively.} Clearly, the only ssd-polyhedron with four vertices is the regular tetrahedron inscribed into the unit sphere. Since, by observing the previous paragraph, there is no pyramidal ssd-polyhedron with an odd number of vertices for $n=5$, we have the only possibility that the polyhedron is simplicial. By the remark of the preceding paragraph, it is also impossible. Let the number of vertices be $n=6$. Then we have two combinatorial possibilities, either $\alpha_5=1$ and $\alpha_3=5$ or $\alpha_4=2$ and $\alpha_3=4$. The first combinatorial situation can be realised as a strongly self-dual pyramid. The second case is combinatorially unique; the two quadrangle faces $ABEF$ and $BCDE$, which must be on the common side $BE$. The six vertices form a six-angle $ABCDEF$ in the space, and two triangle faces are necessarily created by the edges at the vertex $B$ and $E$ (see Figure \ref{fig:sixvertices}). These are $ABC$ and $DFE$, respectively. The space-quadrangle $FACD$ is dissected by one of its edges into two triangle faces; for example, we assume that these are $ACF$ and $CFD$. Since $B$ and $E$ are in the quadrangle faces, $\sigma (B)$  and $\sigma (E)$ contain the edge $FC$, respectively. Since by Theorem \ref{thm:stronglysdpol}  for a vertex $v\not \in \sigma(v)$ we have $\sigma(B)=FCD$ and $\sigma(E)=ACF$. Hence, $\sigma (D)$ is $ABC$. On the other hand, $\sigma (F)$ contains $D$; consequently, $\sigma (D)$ contains $F$, which is a contradiction. Thus, there is no strongly self-dual polyhedron with this combinatorics.

\begin{figure}[ht]
\includegraphics[scale=1]{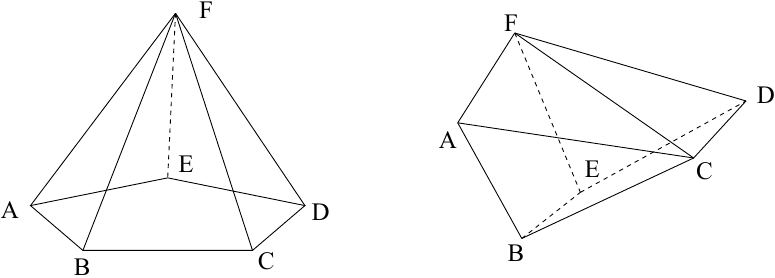}
\caption[]{Two combinatorial cases of six vertices}
\label{fig:sixvertices}
\end{figure}

\noindent{\bf The number of vertices is $n=7$.} Excluding the pyramid that cannot be strongly self-dual, we have two possibilities: $\alpha_5=1$ and $\alpha_4+\alpha_3=6$ or $\alpha_4+\alpha_3=7$. The first case implies that $\alpha_5=1$, $\alpha_3=5$ and $\alpha_4=1$; and the second one possible if $\alpha_3=4$ and $\alpha_4=3$. (If $\alpha_5=1$ and $\alpha_4\geq 2$, the number of vertices is greater than seven. On the other hand, if $\alpha_i=0$ for $i\geq 5$, then by equality \ref{eq:main} $\alpha_3=4.$)

In the first case, there is a unique combinatorial figure. Let $A, B, C, D, E$ be the respective vertices of the pentagon. Since the polyhedron has only two additional vertices, the quadrangle faces have a common side with this pentagon (e.g. $AB$). The valence of an additional vertex (e.g. of $F$) is five. Then, the valence of the last vertex $G$ has to be three because the dual face of a vertex cannot contain the vertex. We must also connect the vertex $F$ with the vertices $C, D$ and $E$. Finally, the last edge of the graph has to join $G$ and one of the vertices from $C, D$ and $E$. Assume that this edge is $GE$. (See the a) picture in Figure \ref{fig:sevenvertices}.) Hence, we have $\sigma(E)=ABFG$. The dual of $A$ and $B$ contain the vertices $E$ and $F$, so they also contain the edge $EF$. Hence, $EGF$ and $EFD$ cannot be the dual of the three-valence vertices $C$, $D$, and $G$. Hence $\sigma(C)=AEG$ moreover $\sigma(D)=BCF$ and $\sigma(G)=CFD$. But in this case, the dual of the edge $EG$ is the intersection of the dual of $E$ and $G$, which is impossible because $\sigma(E)\cap\sigma(G)=ABFG\cap CFD=F$ is not an edge. There is no ssd-polyhedron with such a combinatorial structure. Similar argument holds when one of the segments $GC$ and $GD$ is an edge.

\begin{figure}
\includegraphics[scale=1]{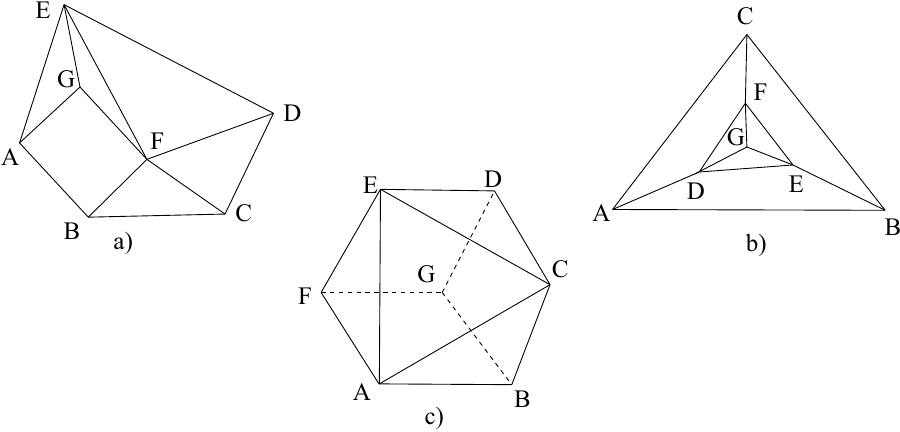}
\caption[]{Combinatorial cases of seven vertices}
\label{fig:sevenvertices}
\end{figure}

The second case has two subcases. The three quadrangles may contain one common edge pairwise, but these three faces have no common point. This gives an existing situation by Lov\'asz's construction (see in b) picture in Figure \ref{fig:sevenvertices}). The final combinatorial possibility is when the three quadrangle faces have a common vertex. Since there is no vertex with either degree five or degree two, duality forces the four triangle faces, as shown in c) picture in Fig. \ref{fig:sevenvertices}. We can assume that $\sigma(C)=EFGD$, $\sigma(A)=BCGD$ and $\sigma(E)=ABGF$. Since $\sigma(F)$ contains $E$ and $C$ it is equal to $ECD$. Similarly, $\sigma(D)=ABC$ and $\sigma(B)=AFE$. From this $FE=FD=FC$, $DC=DB=DA$ and $BA=BF=BE$. But $\sigma(E)$ contains $F$ and $B$, hence it is $ABGF$, meaning that $EF=EA=EB=EG$. Similarly $\sigma(C)=EDGF$, $\sigma(A)=BGDC$ implying $CD=CE=CF=CG$ and $AB=AC=AD=AG$. Hence, all these sides are equal; in fact, $FE=FD=FC=CE=CG=CD=DB=DA=AC=AG=AB=BF=BE=EG=EA=EF$. Hence, $EACG$ is a regular tetrahedron. Since $\sigma(C)=EDGF$ is orthogonal to the line through the origin of this tetrahedron from the vertex $C$, its plane is $\mathrm{aff}\{E, A, G\}$. This is a contradiction because the five points $D, E, F, G, A$ cannot be in the same plane. There is no strongly self-dual polyhedron in this combinatorial class.

\noindent{ \bf The number of vertices is $n=8$.} The combinatorial possibilities are:
\begin{enumerate}
\item[{\bf (i)}] $\alpha_7=1$, $\alpha_3=7$;
\item[{\bf (ii)}] $\alpha_6=1$, $\alpha_3+\alpha_4+\alpha_5=7$;
\item[{\bf (iii)}] $\alpha_5=1$, $\alpha_3+\alpha_4=7$;
\item[{\bf (iv)}] $\alpha_3+\alpha_4=8$.
\end{enumerate}

Since $2|E|=\sum_{l\geq 3} \alpha_l$, if the sum is odd, then there is no convex polyhedron corresponding to the sequence $\alpha_3,\alpha_4,\ldots$. Hence, we can exclude from the investigation the following cases:
\begin{itemize}
\item $\alpha_6=1, \alpha_3=7$;
\item $\alpha_5=1, \alpha_4=1, \alpha_3=6$;
\item $\alpha_4=3, \alpha_3=5$;
\item $\alpha_4=1, \alpha_3=7$.
\end{itemize}

\begin{figure}[ht]
\includegraphics[scale=0.8]{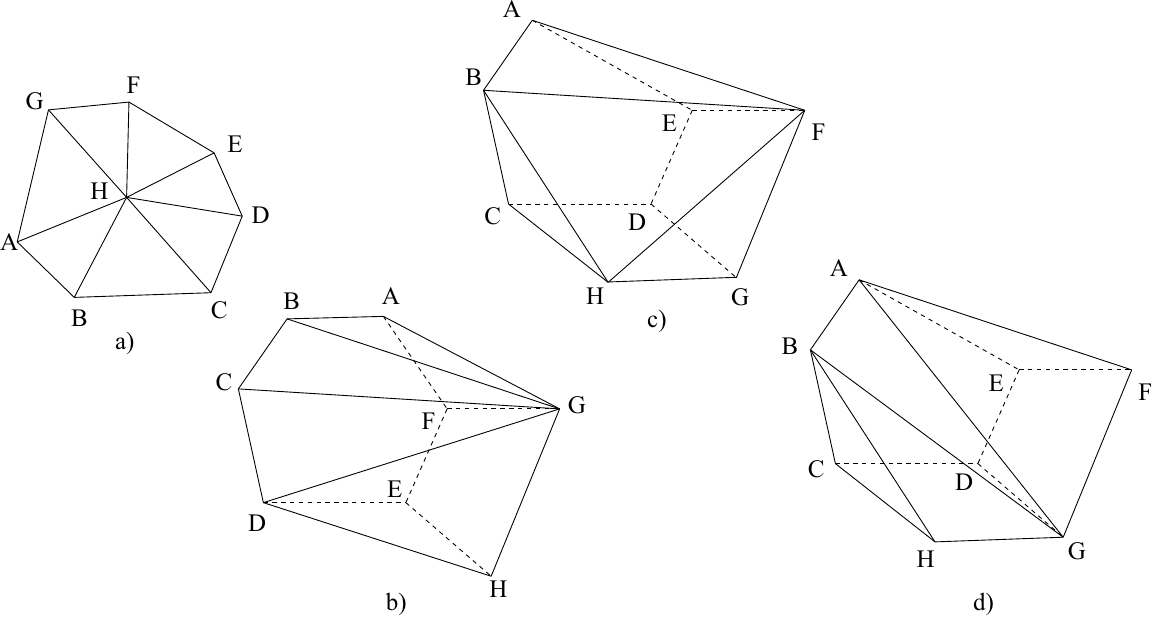}
\caption[]{Eight vertices I}
\label{fig:eightvertices1}
\end{figure}

\noindent {\bf Case  (i)} is realized by an $L$-type strongly self-dual pyramid (see a) in Figure \ref{fig:eightvertices1}).

\noindent {\bf In case (ii)}, we have the possibility that $\alpha_5=0$, $\alpha_4=1$ and $\alpha_3=6$. This situation can be seen in b) picture of \ref{fig:eightvertices1}. $GA$ and $HD$ should be edges, determining the triangle faces $AFG$ and $DEH$, respectively. Since one vertex among $G$ and $H$ has valence six, three new edges must go from this; for example, these connect $G$ to the vertices $B, C$ and $D$. Hence, the valence of $D$ is four. $\sigma(E)$ and $\sigma(F)$ contain $D$ and $G$, furthermore $\sigma(H)$ contains $D$. Since $\sigma(H)$ cannot be $EDH$, $GDH$ or $ABCDEF$ it is $GDC$ and hence one of the duals $\sigma(E)$ and $\sigma(F)$ does not exist.

\noindent{\bf In case (iii)}, from equality \ref{eq:main} we get then $\alpha_3=5$, hence $\alpha_4=2$. 
The three non-triangular faces have to be a common vertex. Denote by $A, B, C, D, E, F, G, H$ the respective vertices as in the c) picture in Fig. \ref{fig:eightvertices1}. We have two combinatorially distinct possibilities for the choice of the $5$-valence vertex. 
\begin{itemize}
    \item First choose the point $F$ to be the $5$-valence vertex. It is incident with one of the quadrangle faces e.g. $EDGF$, see the c) picture in Fig. \ref{fig:eightvertices1}. Clearly, $FA$, $FH$ and $BH$  should be edges. Hence, the last edge originated from $F$ has to connect $F$ with $B$ because three faces should not be incident with the edge $CH$. Now $H$ and $B$ are $4$-valence vertices and $\sigma(D)=BHF$. $\sigma(E)$ contains $F$ and $H$, but it can not be $BHF$, hence it is $FGH$. Similarly, $\sigma(C)=ABF$ and $\sigma(G)$ also contains $F$ and $B$ implying that it is $BCH$. Since $\sigma(A)$ contains $F$ and does not contain $A$, it has no dual, giving a contradiction.
    \item The second possibility, when the $5$-valence vertex is the common vertex $G$ of the two quadrangle faces. (See picture d, in Fig. \ref{fig:eightvertices1}.) This possibility can be realized as an ssd-polyhedron; see the $C_1$ shape of Table 1 in the manuscript \cite{katz-memoli-wang}. (The corresponding pointwise extremal configuration has spherical diameter $2,087071$, which is less than $2\pi/3$, and so the remark after Theorem 1 in \cite{katz} says that its convex hull is an ssd-polytope.)
\end{itemize}

\begin{figure}[ht]
\includegraphics[scale=1]{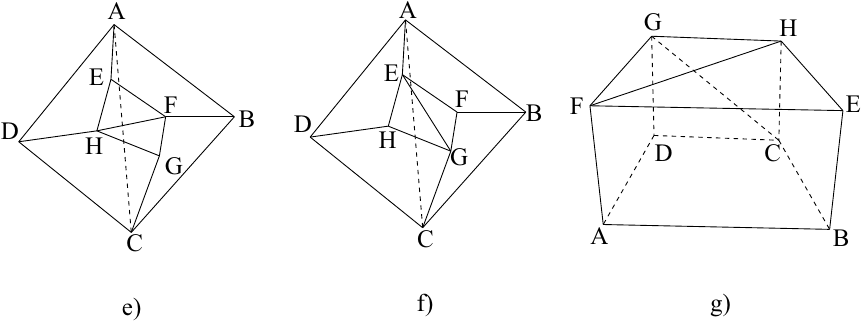}
\caption[]{Eight vertices II}
\label{fig:eightvertices2}
\end{figure}

{\bf In case (iv)}, $\alpha_4+\alpha_3=8$, so equality (\ref{eq:main}) implies that $\alpha_3=\alpha_4=4$. $\alpha_4=\alpha_3=4$. The four quadrangle faces have to create an edge-connected chain because the number of vertices of the polyhedron is eight. This can be realized in three ways.

\begin{itemize} 
\item In the first one, the elements of the closed chain successively join each other in an edge, and there is no common vertex of three from them. In this case, the four triangle faces must give the two "caps" of the polyhedron separated by the zone of quadrangles. Of course, the role of the two new edges of the cap is symmetric, and we have to investigate only the two non-symmetric cases, namely that either the vertices of valence four are $A$,$C$, $F$, and $H$, respectively, or $A$, $C$, $E$, $G$. (The e) and f) pictures in Fig. \ref{fig:eightvertices2}, respectively.) In the first case, if $\sigma(A)=BCGF$ then $\sigma(F)$ is either $CDHG$ or $ADHE$. These two possibilities are symmetric, so we have to investigate only one of them. In the first situation, $\sigma(H)=EFBA$ and $\sigma(C)=ADHE$ giving a contradiction, because $\sigma(E)$ contains $H$ and $C$, implying that $\sigma(F)=\sigma(E)$. If $\sigma(A)=GCDH$ by symmetry, we get a contradiction, similarly, too.

Let us assume that the vertices of valence four are $A$, $C$, $E$, and $G$. If again $\sigma(A)=BCGF$ then $\sigma(E)=CDHG$. If $\sigma(C)=AEFB$ then $\sigma(G)=AEHD$. Since $F$ is a common vertex of $BCGF$ and $BFEA$ then $\sigma(F)$ contains $A$ and $C$. $\sigma(B)$ also contains these points, hence the only possibility $\sigma(F)=ACB$ and $\sigma(B)=ACD$. The dual of the edge $GF$ has to be the intersection of $\sigma(G)=AEHD$ and $\sigma(F)=ACB$, which is not an edge of the polyhedron. Hence we get that $\sigma(C)=AEHD$ and $\sigma(G)=AEFB$. Since $H$ is a common vertex of $\sigma(C)$ and $\sigma(E)$ then $\sigma(H)$ contains the points $C$ and $E$. But now, a face contains these two vertices, creating a contradiction. Hence, there is no ssd-polyhedron with such a face lattice.

\item When three quadrangle faces meet in a vertex we get an edge-vertex graph that can be seen in the right picture of Figure \ref{fig:eightvertices2}. $\sigma(F)$ could be either $ABCD$ or $BCHE$. If $\sigma(F)=ABCD$, then $\sigma(A)$, $\sigma(B)$ and $\sigma(D)$ are triangular faces containing the vertex $F$. Since through $F$ there are only two triangular faces, we get that $\sigma(F)=BCHE$. If $\sigma(C)$ would be $ABEF$ then the triangular faces $\sigma(A)$, $\sigma(B)$ and $\sigma(E)$ contains the vertex $C$, which also impossible. Thus $\sigma(C)=ADGF$. Hence either $\sigma(G)=ABEF$ and $\sigma(H)=ABCD$ or vice versa. These two cases are equivalent to each other. We investigate the situation when $\sigma(G)=ABEF$ and $\sigma(H)=ABCD$. Consider the three-valence vertices. Their duals are triangular faces. In $E$, join three quadrangle faces; its dual is a triangle with four-valence vertices. Thus $\sigma(E)$ is either $FGH$ or $CGH$. Since $GH$ is a common edge of these possible faces in both cases the face $\sigma(H)$ should contain the vertex $E$. But we assumed that $\sigma(H)=ABCD$ which leads to a contradiction. Hence, this combinatorial case excludes the possibility of strong self-duality, as we stated.  

\item Suppose that the four quadrilaterals have a common vertex. Then, we find a contradiction in the fact that no vertex is contained in its dual face.

\end{itemize}
We have now considered all cases and obtained two possible configurations. The first, when $\alpha_7=1, \alpha_3=7$ can be constructed with Lov\'asz's method, and the second one can be implemented with parameters $\alpha_5=1, \alpha_4=2$ and $\alpha_3=5$ by Katz, Memoli and Wang's method. 
\end{proof}

\begin{remark} As a checking of the existence of the Katz-Memoli-Wang's polyhedron we construct it with our independent method of the next section. We found that it realizes with the parameters 
$$
r = 0.493643648472824, \quad 
\kappa = 25.73186609765885^\circ, \quad 
\lambda = 167.1340669511706^\circ.
$$
We also made a metric view of this polyhedron which can be seen in Figure \ref{fig:katz-memoli-wang}
\end{remark}

\begin{figure}[ht]
\includegraphics[scale=0.25]{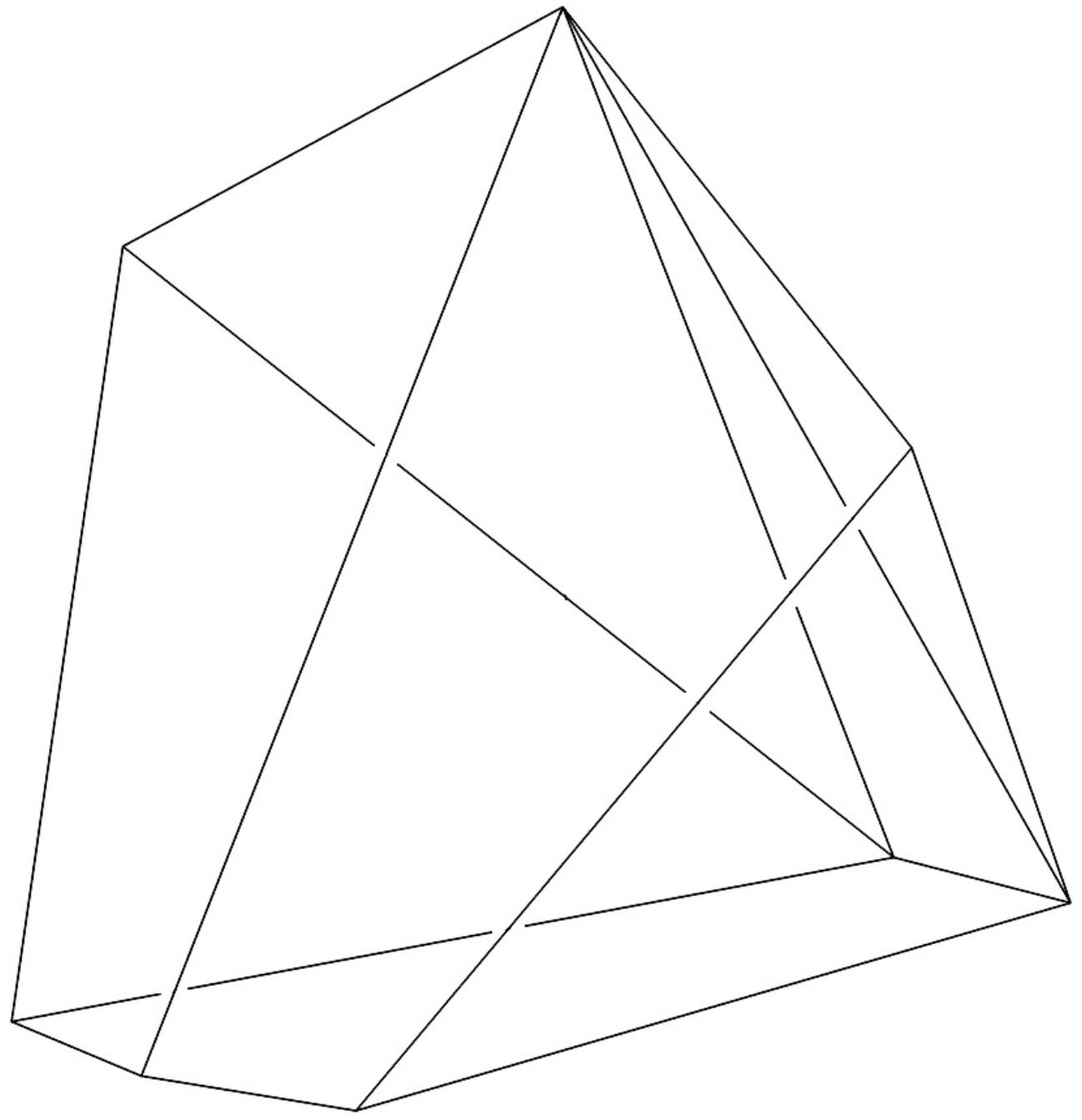}
\caption[]{The Katz--Memoli--Wang's ssd-polyhedron with eight vertices.}
\label{fig:katz-memoli-wang}
\end{figure}

\section{The construction of a non-$L$-type ssd-polyhedron}\label{sec:construction}

Using the algorithm of Theorem \ref{thm:determination}, we construct a new ssd-polyhedron which is combinatorially non-isomorphic to an $L$-type polyhedron. 

\subsection{Formulas on the dual-edges} \label{subsec:dualedges}
Consider the edge $AB$ of the ssd-polyhedron. The duality map $\sigma$ sends $A$ to the face $\sigma(A)$. The circumscribed circle $c$ of $\sigma(A)$ has the centre $K$, where $K$ is the intersection of the line $AO$ with the inscribed sphere of the polyhedron of radius $r$. The  circle $c$ lies on the circumscribed sphere of the polyhedron and has radius $\rho=\sqrt{1-r^2}$. The plane of $c$ is orthogonal to the line $AO$. Similarly, the centre of the circumscribed circle of the face corresponding to the vertex $B$ is the intersection point $L$ of the line $BO$ with the inscribed sphere of the polyhedron. The dual edge $XY$ of the edge $AB$ belongs to both of the above planes; hence, its endpoints $X$ and $Y$ are the intersection points of the two circles. Hence, the lines $AB$ and $XY$ are orthogonal, skew lines whose normal transversal connects the respective edges' midpoints through the centre $O$. Thus, the position vectors $x$ and $y$ (with respect to the origin $O$) of the points $X$ and $Y$ can be determined by the vectors $a$ and $b$ of the points $A$ and $B$ as follows:

\begin{figure}[ht]
   \begin{center}
   \begin{minipage}{0.48\textwidth}
     \includegraphics[width=.7\linewidth]{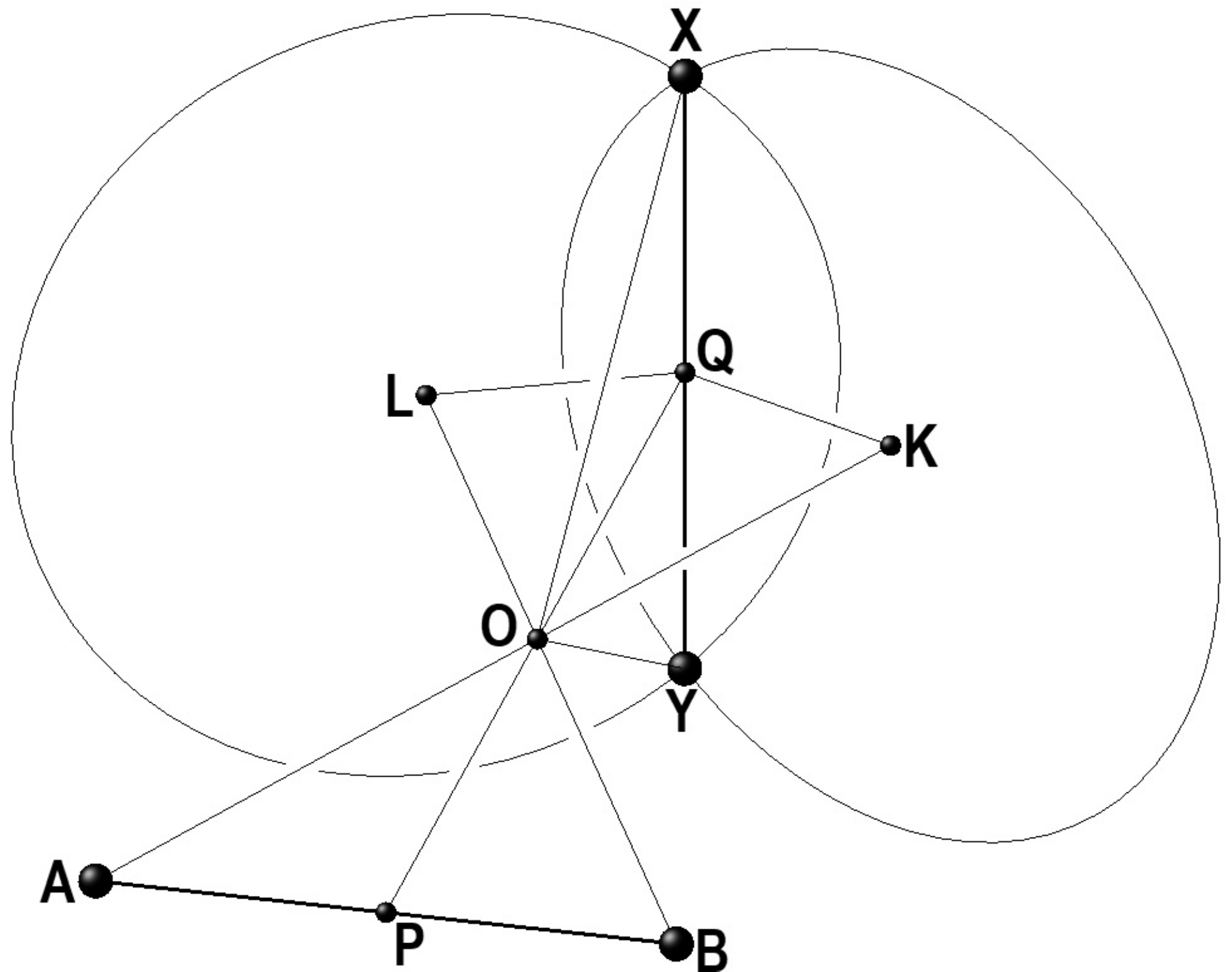}
     \caption{Dual edges I}\label{fig:dual_edges_1}
   \end{minipage}
   \begin {minipage}{0.48\textwidth}
     \includegraphics[width=.7\linewidth]{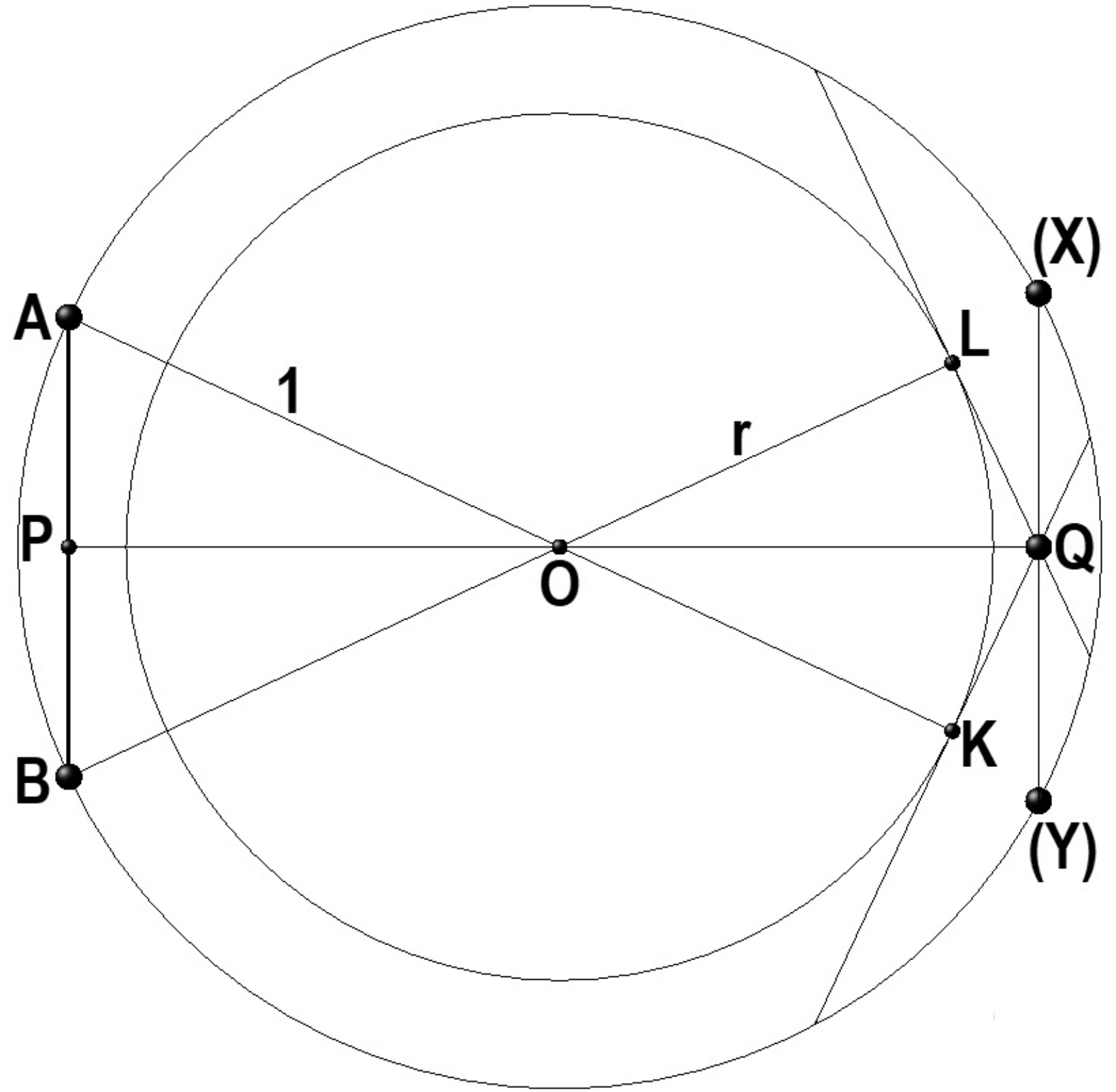}
     \caption{Dual edges 2}\label{fig:dual_edges_2}
   \end{minipage}
   \end{center}
\end{figure}

Since $a$ and $b$ are unit vectors, we have
\begin{equation}
|a+b|=\sqrt{2}\sqrt{1+\langle a,b\rangle}; \, |a-b|=\sqrt{2}\sqrt{1-\langle a,b\rangle}; \, |a\times b|=\sqrt{1+\langle a,b\rangle}\sqrt{1-\langle a,b\rangle}, 
\end{equation}
from which we get by the notation of Figure \ref{fig:dual_edges_1} and \ref{fig:dual_edges_2} that
\begin{equation}
|\overrightarrow{OP}|=|p|=\frac{1}{\sqrt{2}}\sqrt{1+\langle a,b\rangle} \mbox{ and } |\overrightarrow{PA}|=\frac{1}{\sqrt{2}}\sqrt{1-\langle a,b\rangle},
\end{equation}
respectively. (Here, we must consider that $r<|OP|<1$.)
Since the triangles $APO$ and $QKO$ are similar to  each other, we have that $|q|:r=1:|p|$
\begin{equation}
q=\overrightarrow{OQ}=-\frac{r}{1+\langle a,b\rangle}(a+b) \mbox{ and } |q|=\frac{\sqrt{2}r}{\sqrt{1+\langle a,b\rangle}}.
\end{equation}
On the other hand, $\overrightarrow{QX} ||(a \times b)$ and $|\overrightarrow{QX}|^2+|\overrightarrow{OQ}|^2=1$ we get
\begin{equation}
|\overrightarrow{QX}|= \frac{\sqrt{1+\langle a,b\rangle -2r^2}}{\sqrt{1+\langle a,b\rangle}} \mbox{ implying that } \overrightarrow{QX}=\frac{1}{1+\langle a,b\rangle}\sqrt{\frac{1+\langle a,b\rangle -2r^2}{1-\langle a,b\rangle}}(a \times b).
\end{equation}
Similarly, we get for $Y$ that
\begin{equation}
\overrightarrow{QY}=\frac{1}{1+\langle a,b\rangle}\sqrt{\frac{1+\langle a,b\rangle -2r^2}{1-\langle a,b\rangle}}(b \times a),
\end{equation}
and finally, we can introduce the vector-valued map $\Phi_r(a,b)$ also depends on the parameter $r$ by the formula:
\begin{equation}\label{eq:defofPhi}
\Phi_r(a,b):= \frac{1}{1+\langle a,b\rangle}\left(\sqrt{\frac{1+\langle a,b\rangle -2r^2}{1-\langle a,b\rangle}}(a \times b)-r(a+b)\right).   
\end{equation}
Now we get $\Phi_r(a,b)=x$; $\Phi_r(b,a)=y$ moreover $a=\Phi_r(x,y)$ and $b=\Phi_r(y,x)$ also hold.

\subsection{Searching for a non-L-type ssd-polyhedron}

Theorem \ref{thm:determination} shows that one of its faces uniquely determines an $L$-type ssd-polyhedron. The steps of constructing the proof can be used originating from an arbitrary face of a polyhedron inscribed in the unit sphere, and we can investigate whether the algorithm gives or not an ssd-polyhedron. By adding some further assumptions, we think the above theorem can be generalized to a broader class of polyhedra. To verify our idea, we consider a non-regular pentagon as a face and try to construct an ssd-polyhedron originating from this face. If we are successful, then the polyhedron is not an L-type one since if an L-type polyhedron has a pentagonal face, then it should be regular.  
Consider Figure \ref{fig:Figure0}, where we can see a pentagon with two right angles, where $\kappa=\kappa_0=45^\circ$ and $\lambda=\lambda_0=135^\circ$.
\begin{figure}[ht]
     \begin{center}
   \begin{minipage}{0.48\textwidth}
     \includegraphics[width=.7\linewidth]{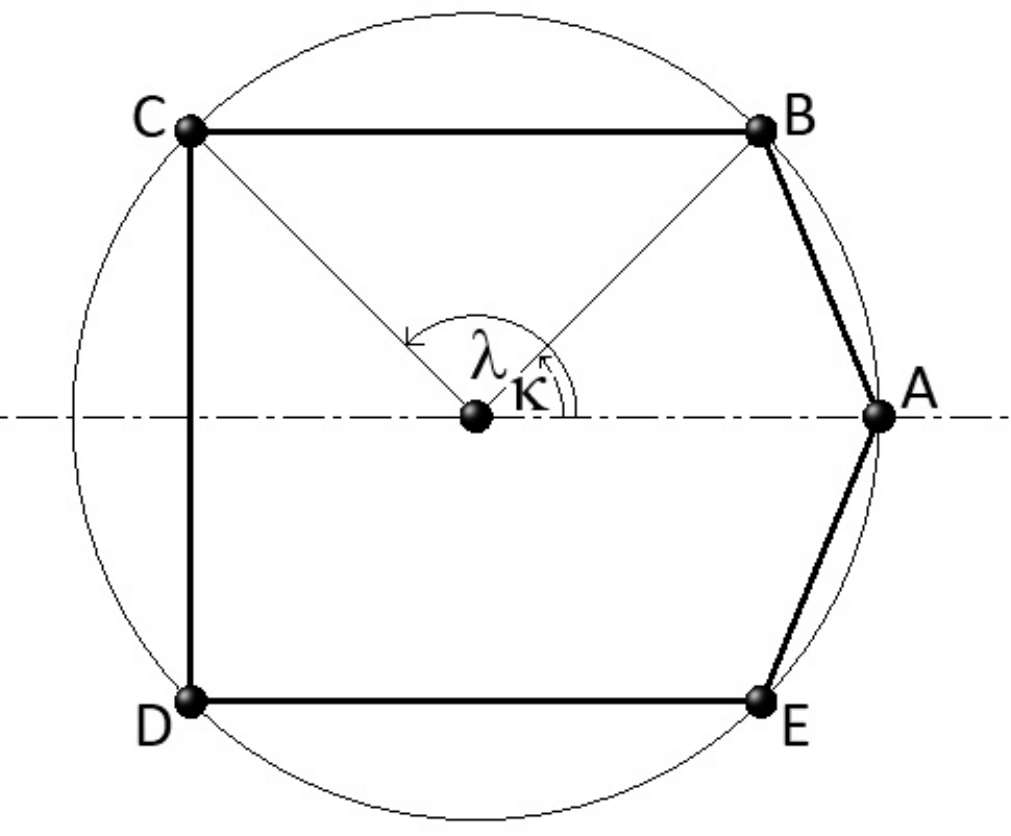}
     \caption{The original pentagon}\label{fig:Figure0}
   \end{minipage}
   \begin {minipage}{0.40\textwidth}
     \includegraphics[width=.7\linewidth]{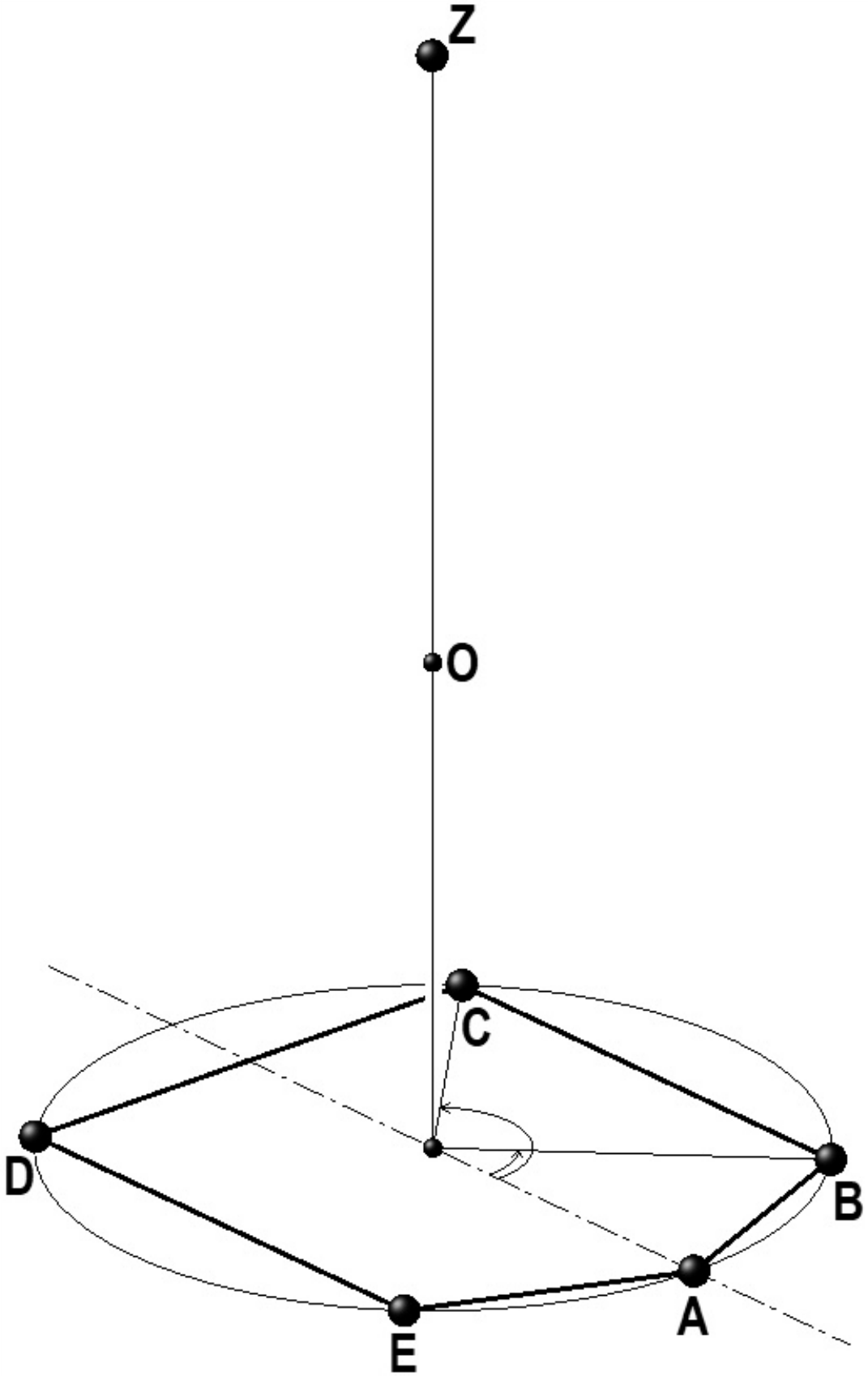}
     \caption{The first duality}\label{fig:Figure1}
   \end{minipage}
   \end{center}
\end{figure}

The lengths of the sides are determined by the radius $r$ of the inscribed sphere. First, we assume that it is $r=r_0=0.8$. Hence, the radius of the circumscribed circles of the faces (and therefore the radius of the circle of the face $ABCDE$) is $\rho=\sqrt{1-r^2}=0.6$. Now the point $O$ is on the line, perpendicular to the face $ABCDE$ and $r=0.8$ far from the center of the circle. The duality map corresponds to this face a vertex $Z$ whose distance from the center of the circle is $1+r=1.8$ (see Figure \ref{fig:Figure1}).

The first non-trivial step of the process is to determine the dual of the sides of the basic face. For example, the vertices of the dual of the edge $CD$ can be obtained by the intersection of the circles whose centres are on the line $CO$ and $DO$, respectively and whose planes are orthogonal to these respective lines. In Figure \ref{fig:Figure2} we can see the realization of this step and the respective duals of the edges of the face $ABCDE$. These are denoted by $ZF, ZG, ZH, ZI$ and $ZJ$, respectively. 
\begin{figure}
    \centering
    \includegraphics[width=0.5\linewidth]{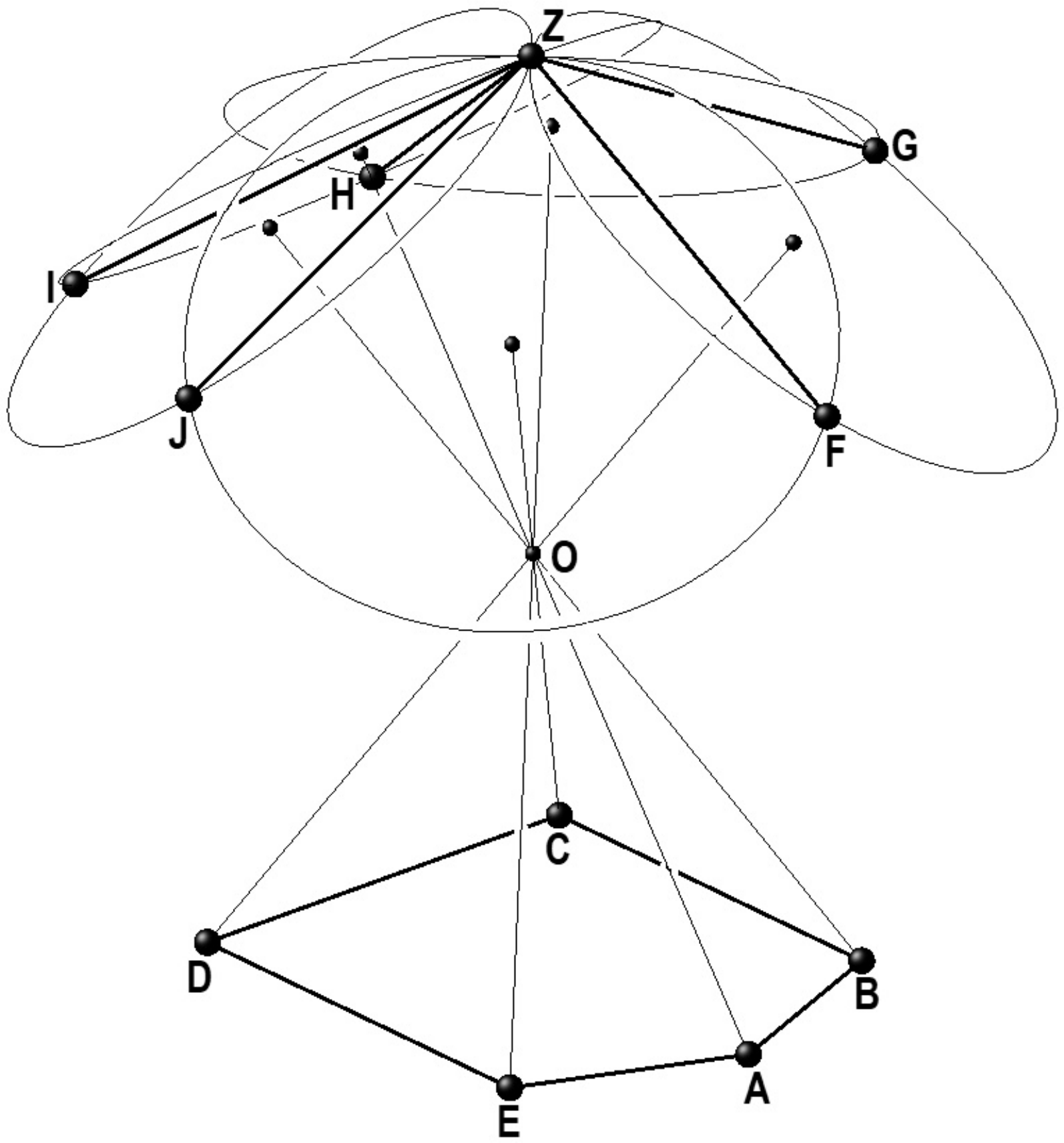}
    \caption{The first step of the construction.}
    \label{fig:Figure2}
\end{figure}
By the map $\Phi_r(\cdot,\cdot)$ this step can be written by formulas, too.
\begin{equation}\label{eq:firststep}
f=\Phi_r(d,c) \quad  g=\Phi_r(e,d) \quad h=\Phi_r(a,e) \quad i=\Phi_r(b,a) \quad j=\Phi_r(c,b) 
\end{equation}
and also
$$
z=\Phi_r(c,d)=\Phi_r(d,e)=\Phi_r(e,a)=\Phi_r(a,b)=\Phi_r(b,c).
$$
From the new vertices $F, G, H, I, J$ we again construct the dual faces and the possible third edges from the face's vertices $A, B, C, D, E$. The corresponding new vertices are $K, L, M, N, P$, the respective edges are $AK, BL, CM, DN$ and $EP$; as shown in Figure \ref{fig:Figure3}. This figure also contains the duals of the new sides, which are the segments $HI, IJ, JF, FG$ and $GH$, respectively. 
\begin{figure}[ht]
     \begin{center}
   \begin{minipage}{0.48\textwidth}
     \includegraphics[width=.7\linewidth]{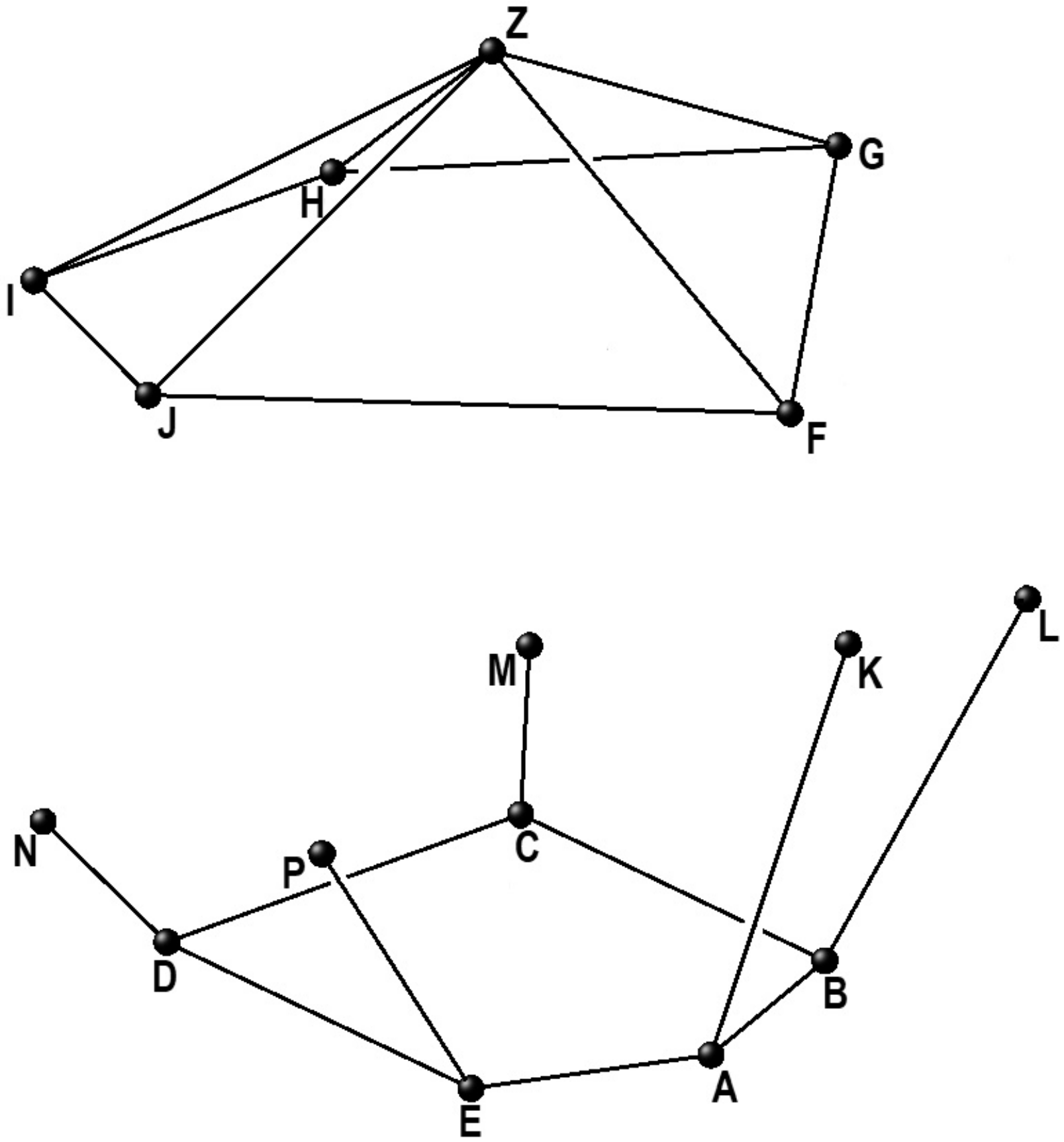}
     \caption{The second step of the construction.}\label{fig:Figure3}
   \end{minipage}
   \begin {minipage}{0.48\textwidth}
     \includegraphics[width=.7\linewidth]{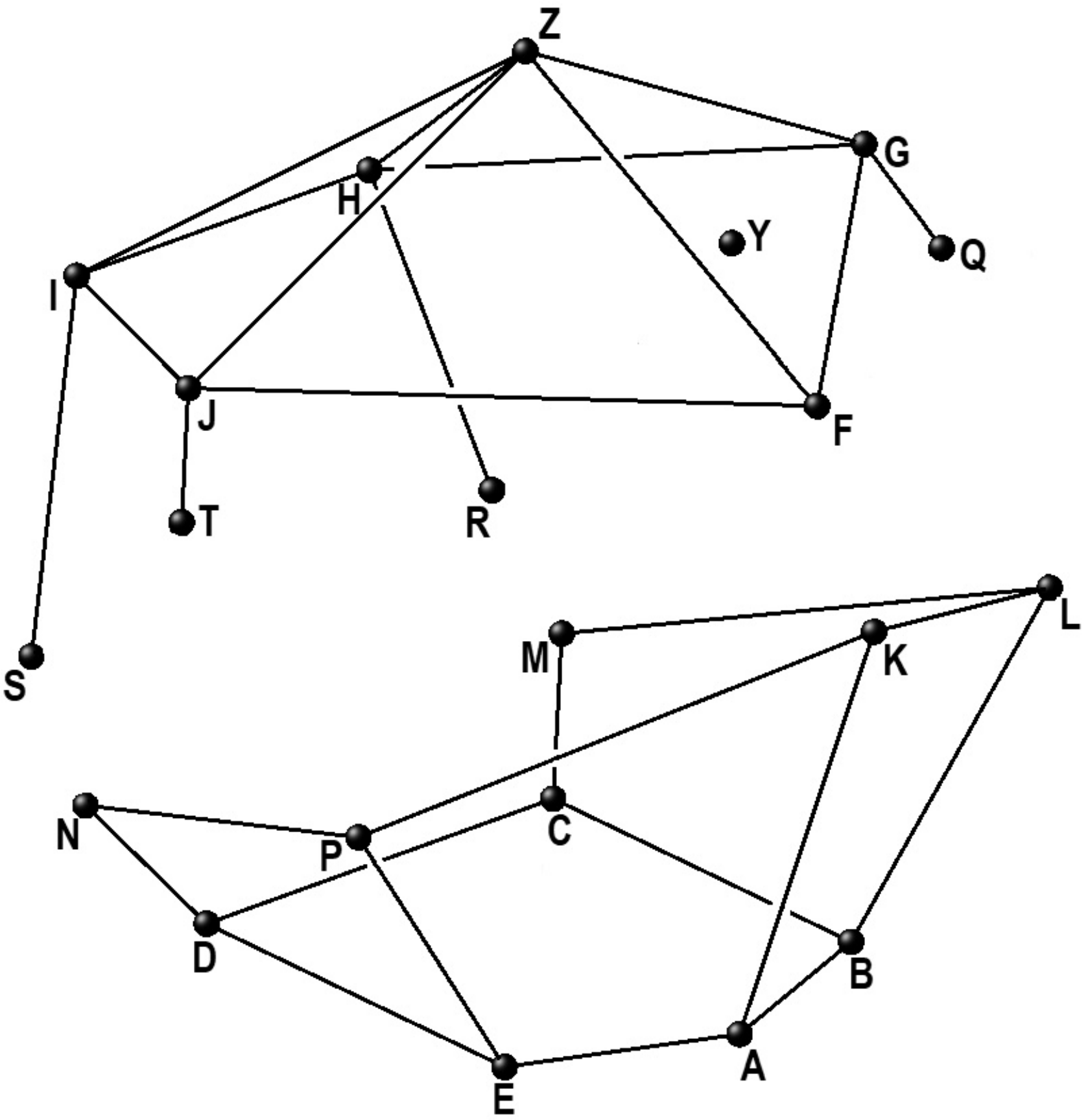}
     \caption{The third step of the construction}\label{fig:Figure4}
   \end{minipage}
   \end{center}
\end{figure}

The position vectors of the new vertices are:
\begin{equation}\label{eq:secondstep}
k=\Phi_r(h,i) \quad l=\Phi_r(i,j) \quad m=\Phi_r(j,f) \quad n=\Phi_r(f,g) \quad p=\Phi_r(g,h).
\end{equation}

We use the vertices $K, L, M, N, P$ in the next step. We get the edges $GQ, HR, IS$ and $JT$, and their duals $NP, PK, KL, LM$. We also get the point $Y$ as a possible vertex as the intersection point of the circles corresponds to the dual faces of the vertices $M$ and $N$. This point from the origin $O$ is behind the edge $ZF$, so it shouldn't create a new edge for our convex polyhedron. Hence, $FY$ and its dual segment $MN$ are not edges of the polyhedron. At this step, we have only four proper new vertices:

\begin{equation}\label{eq:thirdstep}
q=\Phi_r(p,n) \quad r=\Phi_r(k,p) \quad s=\Phi_r(l,k) \quad t=\Phi_r(m,l), 
\end{equation}
and one non-proper vertex $y=\Phi_r(m,n)$.

Finally, we construct from the vertices $Q, R, S$ and $T$. We get the edges $LU$ and $PV$, and their duals $ST$ and $QR$ (see in Figure \ref{fig:Figure5}). We also see that the dual circles of the vertices $R$ and $S$ intersect at the points $K$ and $X$, where $X$ cannot be a proper vertex, as in the case of $Y$. The three new vertices are the proper 
\begin{equation}\label{eq:fourthstep}
u=\Phi_r(s,t) \quad v=\Phi_r(q,r) 
\end{equation}
and the non-proper one $x=\Phi_r(r,s)$.

But $X$ is very close to the point $F$, and by the assumption $F=X$ the process combinatorially closing, the existence of the edge $KX=KF$ implies the existence of its dual $RS$. Now, the circumcircle of the dual face of the last vertex $U$ contains the points $S$ and $T$ and is very close to the vertices $N$ and $V$. If we assume that the space-quadrangle $STVN$ is a combinatorial quadrangle face of the searched polyhedron, then we also have that $NPV$ is a triangle face whose dual is the vertex $Q$. Using the plane symmetry of the construction we also assume that the combinatorial dual of the vertex $V$ is the space-quadrangle $QRMU$ and we also have a new triangle face $LMU$ whose dual is the vertex $T$.

\begin{figure}[ht]
     \begin{center}
   \begin{minipage}{0.48\textwidth}
     \includegraphics[width=.7\linewidth]{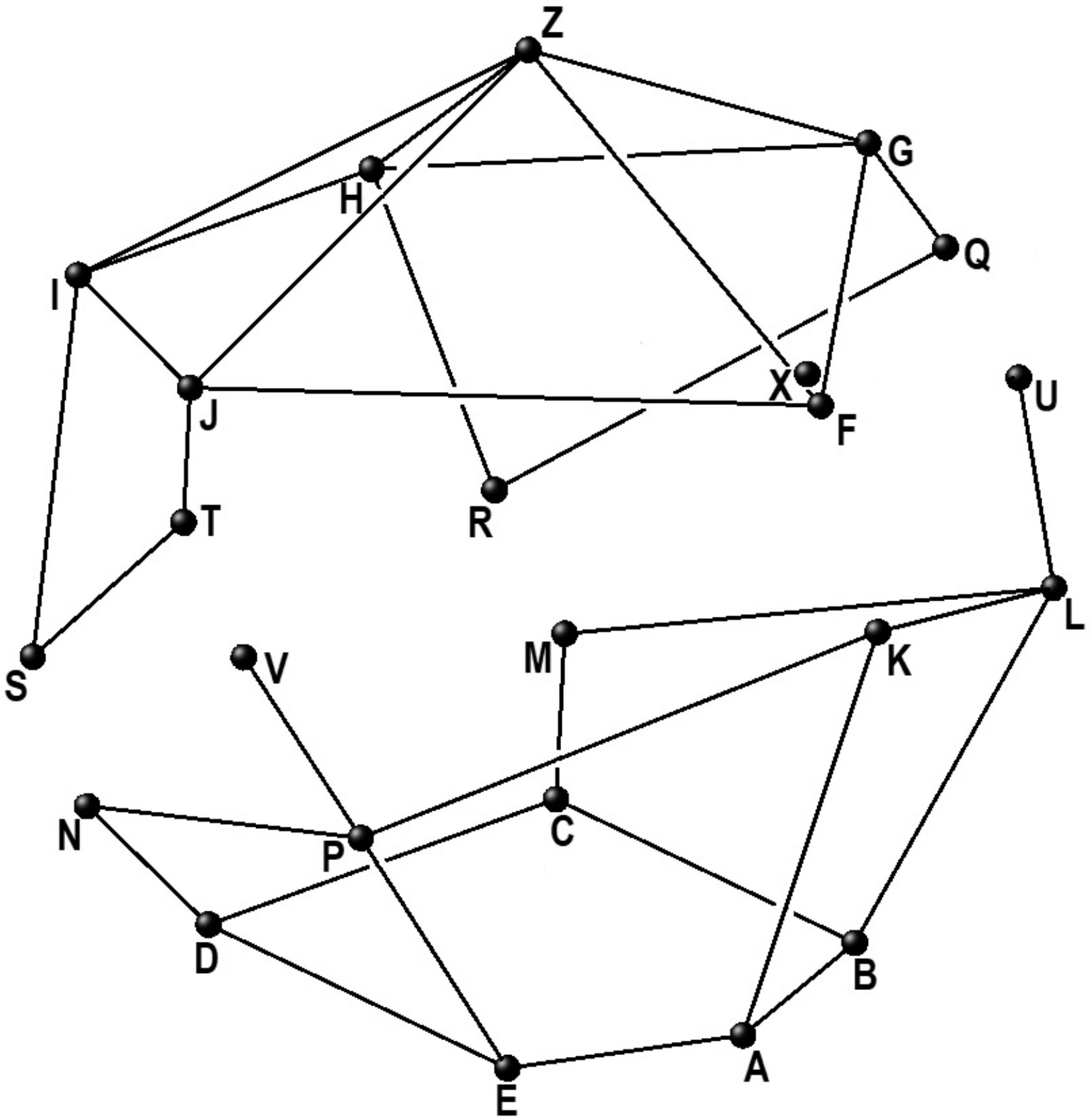}
     \caption{The fourth step of the construction.}\label{fig:Figure5}
   \end{minipage}
   \begin {minipage}{0.48\textwidth}
     \includegraphics[width=.7\linewidth]{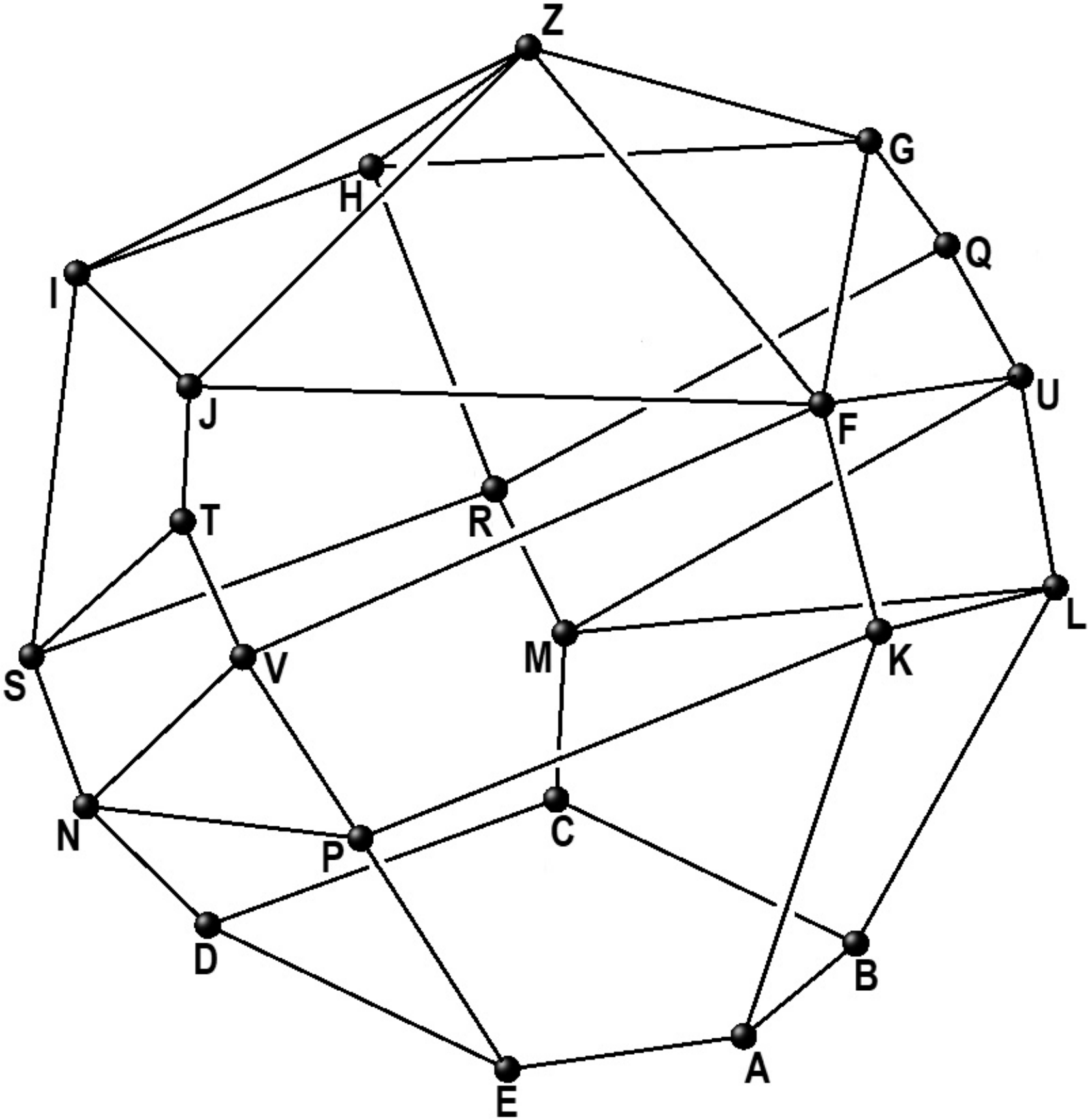}
     \caption{The fifth step of the construction}\label{fig:Figure6}
   \end{minipage}
   \end{center}
\end{figure}
Finally, for the whole combinatorial structure of the polyhedron, we have to draw the edges $MR$ and $NS$ and their duals $FU$ and $FV$ (see Figure \ref{fig:Figure6}). In this step, we create the hexagonal face $CDNSRM$, which is the dual of the vertex $F$; the deltoid-like $FJTV$ and $FUQG$ faces, which are the respective duals of the vertices $M$ and $N$ and the quadrangle faces $FVPK$ and $FULK$, which are the dual of the vertices $R$ and $S$.
\begin{figure}[ht]
     \begin{center}
        \includegraphics[width=.7\linewidth]{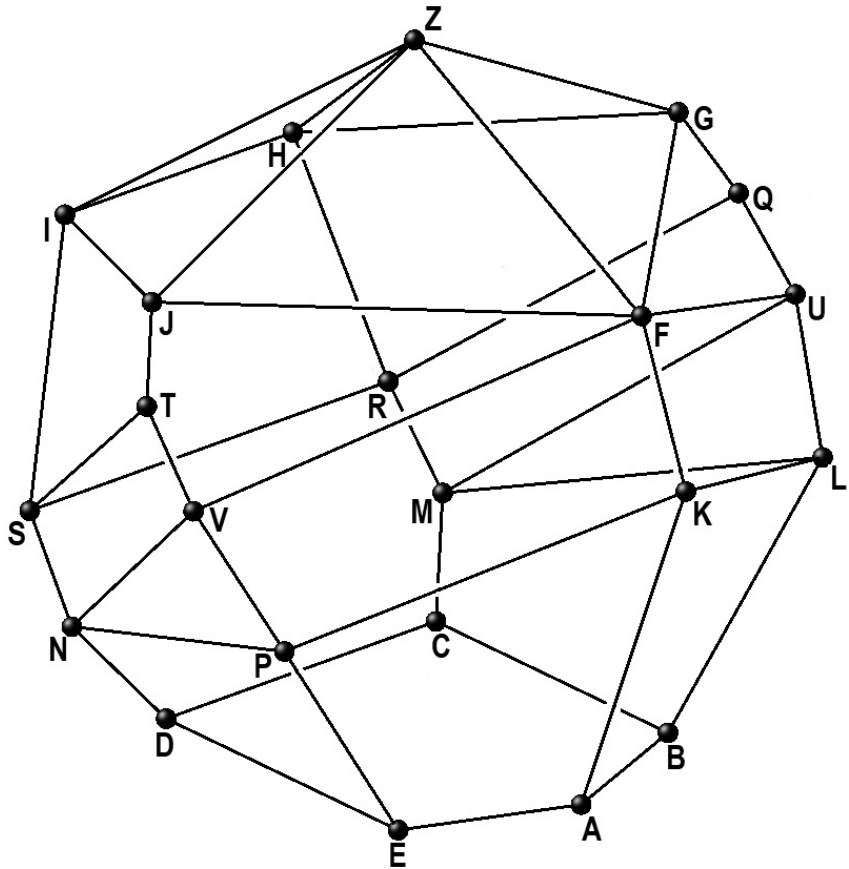}
     \caption{The combinatorial polyhedron.}\label{fig:combinatorialsolution}
   \end{center}
\end{figure}

In Figure \ref{fig:combinatorialsolution}, we drew these edges and faces to show that this combinatorics agrees with the ssd-property. To get a proper metric ssd-polyhedron, we must choose the parameters so that all faces will be planar. The faces which metrically doesn't appropriate are $CDNSRM$, $STVN$, $QRMU$, $FJTV$, $FUQG$, $FVPK$ and $FULK$. Note that if $F$ and $X$ agree, then the faces $CDNSRM$, $FVPK$ and $FULK$ are automatically planar, and the basic pentagon's symmetry implies the polyhedron's planar symmetry. Hence, we have to check the planarity the two quadrangle faces $STVN$ and $FJTV$. In both situations, we must check whether the vertex $V$ incident or not to the plane of the other three vertices, which are the dual planes of the respective vertices $U$ and $M$. We saw that all steps of the construction were uniquely determined by the original data $\kappa,\lambda $ and $r$. We search in a little neighbourhood of these data an exact solution of our task preserve the plane symmetry of the polyhedron at the plane $AOZ$. To this we have to solve the following equation system:
\begin{eqnarray}
    |f-x| & = & 0 \\
    \frac{(s-n)(t-n)(v-n)}{|(s-n)\times (t-n)|} & = & 0 \\
    \frac{(f-t)(j-t)(v-t)}{|(f-t)\times (j-t)|} & = & 0
\end{eqnarray}
where the left sides of the second and third equality mean the distance of the vertex $V$ to the planes $NST$ and $FJT$, respectively. The vector of the vertex $A$ depends on $r$ (it is $a(r)$), and the other four vectors depend on $r$ and one of the angles $\kappa$ and $\lambda$. These are $b(r,\kappa), c(r,\lambda), d(r,\lambda)$ and $e(r,\kappa)$, respectively. All other vertices depend on these vertices and the map $\Phi_r(\cdot,\cdot)$ is also depends on $r$. The dependence of the symbols on the original data can be written precisely:
$$
f=\Phi_r(d,c); g=\Phi_r(e,d); h=\Phi_r(a,e); i=\Phi_r(e,a); j=\Phi_r(c,e)
$$
$$
k=\Phi_r(\Phi_r(a,e),\Phi_r(b,a)); l=\Phi_r(\Phi_r(b,a),\Phi_r(c,b));
m=\Phi_r(\Phi_r(c,b),\Phi_r(d,c)); 
$$
$$
n=\Phi_r(\Phi_r(d,c),\Phi_r(e,d)); p=\Phi_r(\Phi_r(e,d),\Phi_r(a,e))
$$
$$
q=\Phi_r(\Phi_r(\Phi_r(e,d),\Phi_r(a,e)),\Phi_r(\Phi_r(d,c),\Phi_r(e,d)))
$$
$$
r=\Phi_r(\Phi_r(\Phi_r(a,e),\Phi_r(b,a)),\Phi_r(\Phi_r(e,d),\Phi_r(a,e)))
$$
$$
s=\Phi_r(\Phi_r(\Phi_r(b,a),\Phi_r(c,b)),\Phi_r(\Phi_r(a,e),\Phi_r(b,a)))
$$
$$
t=\Phi_r(\Phi_r(\Phi_r(c,b),\Phi_r(d,c)),\Phi_r(\Phi_r(b,a),\Phi_r(c,b)))
$$
\begin{flalign*}
u=\Phi_r(\Phi_r(\Phi_r(\Phi_r(b,a),\Phi_r(c,b)),\Phi_r(\Phi_r(a,e),\Phi_r(b,a))),&&
\end{flalign*}
\begin{flalign*} &&\Phi_r(\Phi_r(\Phi_r(c,b),\Phi_r(d,c)),\Phi_r(\Phi_r(b,a),\Phi_r(c,b))))
\end{flalign*}
\begin{flalign*}
v=\Phi_r(\Phi_r(\Phi_r(\Phi_r(e,d),\Phi_r(a,e)),\Phi_r(\Phi_r(d,c),\Phi_r(e,d))),&&
\end{flalign*}
\begin{flalign*} && \Phi_r(\Phi_r(\Phi_r(a,e),\Phi_r(b,a)),\Phi_r(\Phi_r(e,d),\Phi_r(a,e))))
\end{flalign*}
\begin{flalign*}
x=\Phi_r(\Phi_r(\Phi_r(\Phi_r(a,e),\Phi_r(b,a)),\Phi_r(\Phi_r(e,d),\Phi_r(a,e))),&&
\end{flalign*}
\begin{flalign*}&&
\Phi_r(\Phi_r(\Phi_r(b,a),\Phi_r(c,b)),\Phi_r(\Phi_r(a,e),\Phi_r(b,a))))
\end{flalign*}
Because of the complicated structure of the equation system, we search for a numerical solution. We use the following coordinates:
$$
a=(\rho, 0, -r); \, \, b=(\rho\cos \kappa, \rho\sin \kappa, -r);\, \, c=(\rho\cos\lambda, \rho \sin\lambda,-r)
$$
$$
d=(\rho\cos\lambda, -\rho \sin\lambda,-r); \, \, e= (\rho\cos\kappa, -\rho \sin\kappa,-r); \, \, z=(0, 0, 1),
$$
where  $\rho=\sqrt{1-r^2}$.
The process is as follows. Taking the values $\kappa_0=45^\circ$, $\lambda_0=135^\circ$ and $r_0=0.8$ as the base, consider the neighborhood of radius $\delta_0=0.1$ and divide these intervals into equal parts $n=200$. We get a three-dimensional grid for the parameter domains. We determine the polyhedron's vertices and the required three distances at all lattice points. The calculation error is the maximum value of the above three distances. We choose a lattice point $(\kappa_1,\lambda_1,r_1)$ with minimal error; this will be the origin of the next step. Parallelly, we restrict the radius of the examined domain by choice $\delta_1=\frac {\delta_0}{3}$. We divide the shorter intervals into $n$ parts again and densing the grid locally. Repeated the calculation of the first step, we choose a new point $(\kappa_2,\lambda_2,r_2)$ of the parameter domain and repeat the process. We stop when the error reduces below a given bound. In our case, required the bound $10^{-15}$ the twenty-seventh step gives the following result:
\begin{equation}\label{eq:finalresult}
    \kappa_{27}=45.18708115925679^\circ, \quad \lambda_{27}=137.9708898008123^\circ, \quad r_{27}=0.801257067121262.
\end{equation}
The vertices of the polyhedron (with the same calculation accuracy) are in the table below.

$$
\begin{tabular}{l}
$Z\,(0.0,\; 0.0,\; 1.0)$, \\
$A\,(0.5983202423353512,\; 0.0,\; -0.8012570671212621)$, \\
$B\,(0.4216926266320513,\; 0.4244554641330405,\; -0.8012570671212621)$, \\
$C\,(-0.4444351271090439,\; 0.4005802418739614,\; -0.8012570671212621)$, \\
$D\,(-0.4444351271090439,\; -0.4005802418739614,\; -0.8012570671212621)$, \\
$E\,(0.4216926266320513,\; -0.4244554641330405,\; -0.8012570671212621)$, \\
$F\,(0.8483424447791927,\; 0.0,\; 0.5294479165943166)$, \\
$G\,(0.0224329604142071,\; 0.8138064392649369,\; 0.5807028859046416)$, \\
$H\,(-0.8628874394036844,\; 0.3590712428534728,\; 0.3556586980168142)$, \\
$I\,(-0.8628874394036844,\; -0.3590712428534728,\; 0.3556586980168142)$, \\
$J\,(0.0224329604142071,\; -0.8138064392649369,\; 0.5807028859046416)$, \\
$K\,(0.9891443044532439,\; 0.0,\; 0.1469474224602405)$, \\
$L\,(0.4887101245345391,\; 0.8460369840318558,\; -0.2130348230400762)$, \\
$M\,(-0.6075422756722804,\; 0.5825733695380467,\; -0.5399080036228703)$, \\
$N\,(-0.6075422756722804,\; -0.5825733695380467,\; -0.5399080036228703)$, \\
$P\,(0.4887101245345391,\; -0.8460369840318558,\; -0.2130348230400762)$, \\
$Q\,(0.1228243012703047,\; 0.9324888687546015,\; 0.3396744039020674)$, \\
$R\,(-0.7680489592088464,\; 0.5746015768538119,\; -0.2827257047658043)$, \\
$S\,(-0.7680489592088464,\; -0.5746015768538119,\; -0.2827257047658043)$, \\
$T\,(0.1228243012703047,\; -0.9324888687546015,\; 0.3396744039020674)$, \\
$U\,(0.2797981540096859,\; 0.9490145065069506,\; 0.1452048878383263)$, \\
$V\,(0.2797981540096859,\; -0.9490145065069506,\; 0.1452048878383263)$, \\
$X\,(0.8483424447791927,\; 0.0,\; 0.5294479165943166)$. 
\end{tabular}
$$
 The orthogonal projections of the final polyhedron can be seen in Figure \ref{fig:Figure7}. The pictures show the three standard orthogonal Monge images and a further orthogonal projection in a general direction of the body. 

\begin{figure}[ht]
\begin{center}
        \includegraphics[width=.6\linewidth]{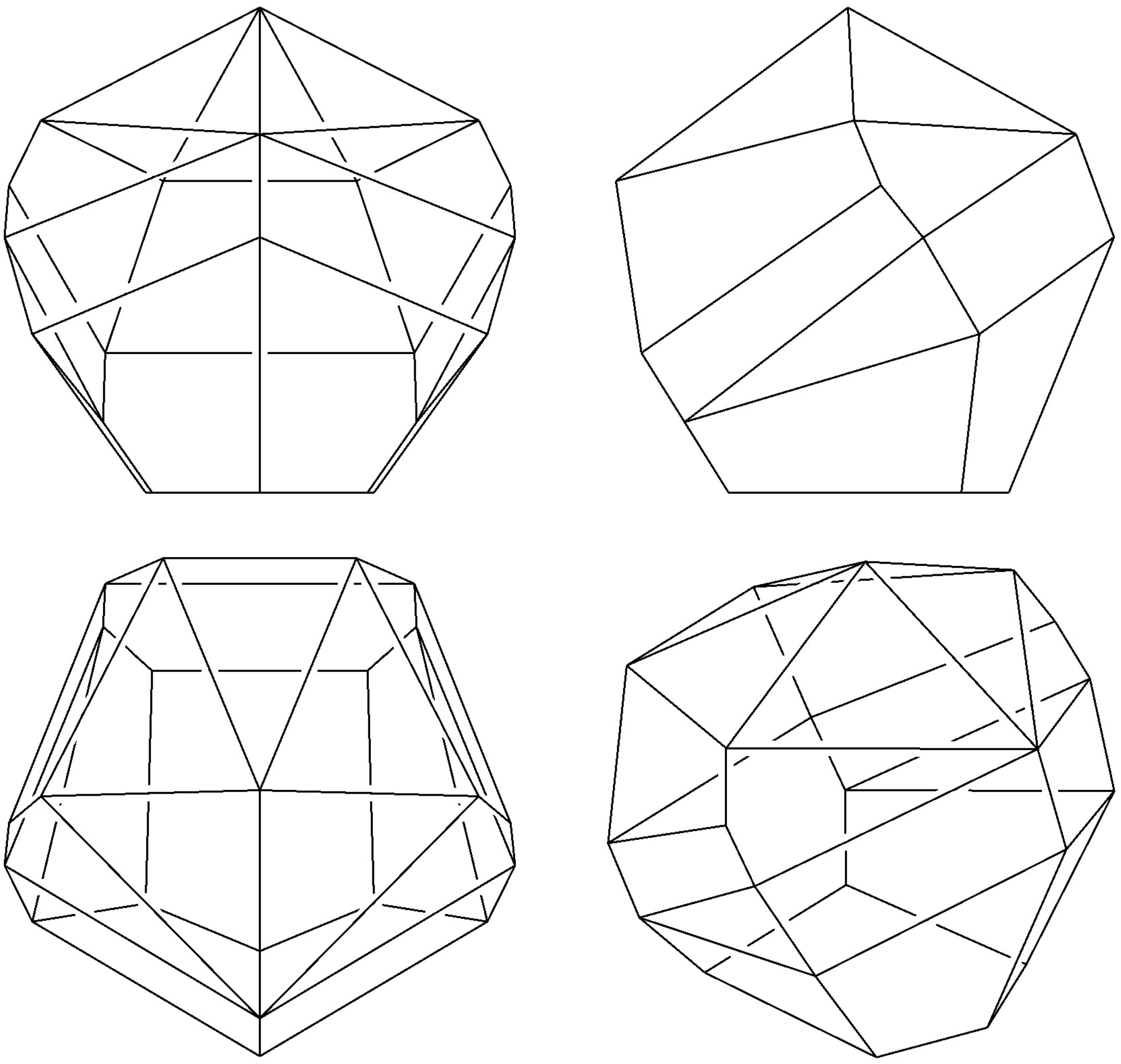}
     \caption{The othogonal projections of the new non-L-type ssd- polyhedron}\label{fig:Figure7}
   \end{center}
\end{figure}

\end{document}